\newtheorem{theorem}{Theorem}[section]
\newtheorem{definition}{Definition}[section]
\newtheorem{corollary}{Corollary}[section]
\newtheorem{remark}{Remark}[section]
\newtheorem{tverd}{Proposition}[section]
\begin{document}

\begin{center}
\textbf{Asymptotic  expansion for a solution of an  elliptic boundary-value problem in
a thin cascade domain}
\footnote{Please cite this article in press as:  Klevtsovskiy A.V., \&  Mel'nyk T.A.
Asymptotic expansion for a solution of an elliptic boundary-value problem in a thin cascade domain. Nonlinear Oscillations (2013), {\bf 2}}
\end{center}

\bigskip

\begin{center}
A. V. Klevtsovskiy, \ \ \ T.~A.~Mel'nyk\footnote{
Taras Shevchenko National University of Kyiv,
Faculty of Mathematics and Mechanics,
Department of Mathematical Physics,
Volodymyrska str. 64,\ 01601 Kyiv,  \ Ukraine.
\ E-mail: melnyk@imath.kiev.ua}
\end{center}

\bigskip

\begin{abstract}
Asymptotic expansion is constructed and justified for the solution to a nonuniform Neumann boundary-value problem for the Poisson equation
with the right-hand side that depends both on longitudinal and transversal variables in a thin cascade domain. Asymptotic energetic and uniform pointwise estimates for the difference between the solution of the initial problem and the solution of the corresponding limiting problem are proved.
\end{abstract}

\bigskip

{\bf Key words:} \  asymptotic expansion; thin domain; thin cascade domain

\medskip

{\bf MOS subject classification:} \  35C20, 35B40, 35J05, 74K30

\section{Introduction}

There are many articles and books (see, e.g., \cite{G62}-\cite{N-book-02}) devoted to boundary-value problems in thin domains (one of linear sizes of such a domain is substantially smaller than the others). The reason for such popularity of these problems is the wide possibilities of application of results in applied problems.
In spite of a huge progress of computational tools it is impossible to find acceptable numerical solutions of boundary-value problems in such areas because very small thickness of a thin domain naturally leads to a lengthening of computation time and significantly complicates
 the maintenance of an acceptable level of accuracy. Thus the main method of such researches is asymptotic analysis. The aim of this analysis is to develop rigorous asymptotic methods for boundary-value problems in thin domains.

In recent years, the development of modern technologies of production of porous, composite, and other microinhomogeneous
materials and biological structures has stimulated a significant interest in the investigation of
boundary-value problems  in thin domains of more complex structures:
thin perforated domains with rapidly varying thickness and different limit dimensions  \cite{Mel_Pop_MatSb,Mel_Pop_book},
thin perforated domains with rapidly varying thickness \cite{M91d,Mel_Pop_MatSb,Chech_Pichug,Kohn-V,ArrietaCarSilPer2,ArrietaCarSilPer},
junctions of thin domains \cite{NazPlam,Panasenko,Naz96,Gaudiello-Zapp,Gaudiello-Kolpakov},
thick junctions of thin domains \cite{MN94,ZAA99,BlanGau03,BlGaMe08}, periodic grids and frames \cite{CiorPaulin,ZhikovPast}.

Research of various physical and biological processes in channels are topical for many branches of science (see, e.g.,
\cite{Borysiuk2007,Borysiuk2010} and references indicated there).
 Of great interest to researchers is the appearance of different effects (such as sticking to walls, stenosis) in close vicinity to local irregularities
 (widening or narrowing) of geometry of channels. In \cite{Borysiuk2007,Borysiuk2010} the author has summarized results of recent theoretical, experimental and numerical studies of flows and wall pressure fluctuations in channels with different types of narrowing.


\begin{figure}[htbp]
\begin{center}
\includegraphics[width=15cm]{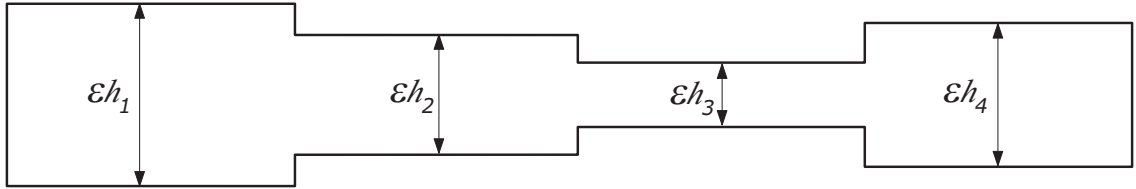}
\caption{Thin cascade domain }
\end{center}
\end{figure}

In \cite{Gaudiello-Kolpakov} by methods of formal asymptotic analysis, the limiting problem was obtained for a homogeneous Neumann problem for the Poisson equation with the right-hand side, which depends only of one longitudinal variable, in junctions of thin domains.
The authors showed that the local geometric irregularity in the joint zone does not affect the view of the limiting problem.
But, the convergence theorem and asymptotic estimates have not been proven.

It should be stressed that the error estimate and convergence rate are very important both in justifying the adequacy of one-dimensional (two-dimensional) models to real thin three-dimensional models, and in the study of boundary effects and effects of local (internal) inhomogeneities in  applied mechanics. Such estimates can be proved by developing of new asymptotic methods.

In this paper we begin to develop asymptotic methods for boundary-value problems in
{\it thin cascade domains}, which are cascade connections of thin domains with different thickness (see Fig.~1).
 To construct the formal asymptotic expansion we have generalized asymptotic method for thin domains with constant thickness proposed in monograph \cite{N-book-02}. In particular, we introduced additional inner boundary-layer asymptotic expansion in a neighborhood of the joint of thin domains and studied its properties. Thus, the asymptotics for the solution consists of three parts:  regular part, boundary part near the extreme vertical sides and inner part in in a neighborhood of the joint zone.

 It is understood that there is no principal  difference between the construction of the asymptotics for the solution to a boundary-value problem in a thin cascade domain consisting of two thin domains of varying thickness, and in a thin cascade domain consisting of $n$ thin domains of different thickness. Therefore, we consider a model two-stage thin cascade  in this paper.  Also, for simplicity, we consider two-dimensional case.

 The aim of this paper is to construct and justify the asymptotic expansion for the solution to a nonuniform
 Neumann boundary-value problem for the Poisson equation with the right-hand side
 that depends both on longitudinal and transversal variables in a thin cascade domain, which consist of two thin rectangles of different thicknesses  $\varepsilon\, h_1$ and   $\varepsilon\, h_2$.

The paper is organized as follows. In Section 2 we construct formal asymptotic expansion for the solution to  the problem  (\ref{probl}).
Section 3 is devoted  to the justification of the asymptotics  (theorem~\ref{mainTheorem}) and to the proving of the asymptotic estimates for the main terms of the asymptotics (corollary~\ref{Corollary}). In the fourth section we analyze the results obtained and show possible generalizations.

 \subsection{Statement of the problem}

A model thin cascade domain $\Omega_\varepsilon$ consists of two thin rectangles
$$
\Omega_\varepsilon^{(1)}=\left((-1,0)\times\Upsilon_\varepsilon^{(1)}\right)\quad \text{and} \quad \Omega_\varepsilon^{(2)}=\left((0,1)\times\Upsilon_\varepsilon^{(2)}\right),
$$
where $ \Upsilon_\varepsilon^{(i)}=\left(-\varepsilon\frac{h_i}{2},\varepsilon\frac{h_i}{2}\right), \ i=1,2;$ \ $\varepsilon$ is a small parameter; $h_1$ and $h_2$ are fixed positive constants, $h_2 < h_1$ (see Fig.~2).

\begin{figure}[htbp]
\begin{center}
\includegraphics[width=14cm]{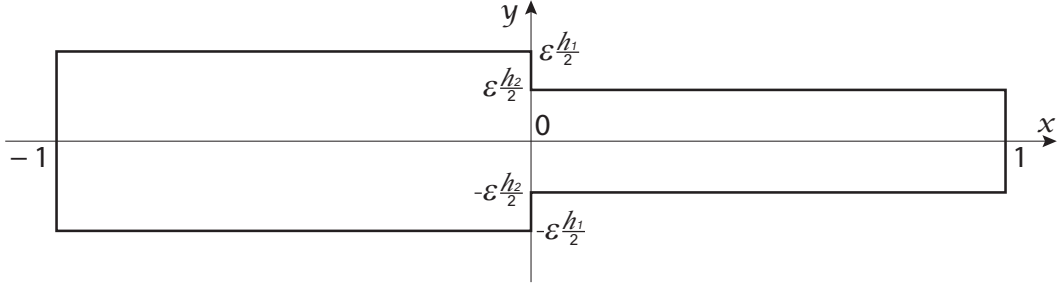}
\caption{Model thin cascade domain $\Omega_\varepsilon$}
\end{center}
\end{figure}

In $\Omega_\varepsilon=\big((-1,0)\times\Upsilon_\varepsilon^{(1)}\big)\bigcup \big([0,1)\times\Upsilon_\varepsilon^{(2)}\big)$
we consider the following mixed boundary-value problem
\begin{equation}\label{probl}
\left\{\begin{array}{rcll}
-\Delta{u_\varepsilon}(x,y)                                                & = & f(x,\frac{y}{\varepsilon}),        & (x,y)\in\Omega_\varepsilon,                  \\[2mm]
-\partial_y{u_\varepsilon}(x,y)|_{y=\pm{\varepsilon\frac{h_i}{2}}}         & = & {\varepsilon\varphi_\pm^{(i)}}(x), & x\in{I_i},\ i=1,2,                           \\[2mm]
u_\varepsilon(-1,y)                                                        & = &
0,                                 & y\in\Upsilon_\varepsilon^{(1)},              \\[2mm]
u_\varepsilon(1,y)                                                         & = &
0,                                 & y\in\Upsilon_\varepsilon^{(2)},              \\[2mm]
\partial_x{u_\varepsilon}(x,y)|_{x=0}                                      & = &
0,                                 & y\in\Upsilon_\varepsilon^{(1)}\backslash\Upsilon_\varepsilon^{(2)},               \\[2mm]
[u_\varepsilon]|_{x=0}                                                     & = &
0,                                 & y\in\Upsilon_\varepsilon^{(2)},              \\[2mm]
[\partial_x u_\varepsilon]|_{x=0}                                          & = &
0,                                 & y\in\Upsilon_\varepsilon^{(2)},
\end{array}\right.
\end{equation}
\\[3mm]
where $I_1=(-1,0),\,I_2=(0,1)$, $[u]|_{x=0}=u(x,y)|_{x=0+}- u(x,y)|_{x=0-}$ is the jump of function~$u,$
$\partial_x=\partial/\partial{x},$  $\partial^2_{xx}=\partial^2/\partial{x}^2.$
We assume that functions $f$ and $\{\varphi_\pm^{(i)}\}$ are smooth functions in their domains of definition.

From the theory of linear boundary-value problems it follows that for every fixed $\varepsilon$  there exists a unique generalized solution $u_\varepsilon$ to problem (\ref{probl}), i.e., $u_\varepsilon \in H^1(\Omega_\varepsilon),$ its traces on the vertical sides of
$\Omega_\varepsilon$ are equal to 0 $(u_\varepsilon|_{x=\pm1}=0)$ and the following integral identity
\begin{equation}\label{int-identity}
\int_{\Omega_\varepsilon} \nabla u_\varepsilon \cdot \nabla \psi \, dx\,dy =\int_{\Omega_\varepsilon} f \,\psi \, dx\,dy
 \mp \varepsilon  \sum_{i=1}^2 \int_{I_i} \varphi^{(i)}_{\pm} \, \psi\, dx
\end{equation}
holds for any function $\psi\in H^1(\Omega_\varepsilon)$ such that $\psi|_{x=\pm1}=0.$

\begin{remark}
In right-hand side of identity (\ref{int-identity}) the following  brief record
$$
 \mp \varepsilon  \sum_{i=1}^2 \int_{I_i} \varphi^{(i)}_{\pm} \, \psi\, dx :=
 - \varepsilon  \sum_{i=1}^2 \int_{I_i} \varphi^{(i)}_{+} \, \psi\, dx +
 \varepsilon  \sum_{i=1}^2 \int_{I_i} \varphi^{(i)}_{-} \, \psi\, dx,
$$
is introduced that will be used further.
\end{remark}

Our aim is to construct and justify asymptotic expansion of the solution  $u_\varepsilon$ as  $\varepsilon \to 0$.


\section{Formal construction of asymptotic series}

 \subsection{Regular part of asymptotics}

Regular part of the asymptotics is sought in the form
\begin{equation}\label{regul}
u_\infty^{(i)}:=\sum\limits_{k=2}^{+\infty}\varepsilon^{k}\left(u_k^{(i)}(x,\frac{y}{\varepsilon})+\varepsilon^{-2}\omega_k^{(i)}(x)\right), \quad (x,y)\in\Omega^{(i)}_\varepsilon, \quad i=1,2.
\end{equation}
Formally substituting the series (\ref{regul}) into the differential equation and into the first boundary condition of problem (\ref{probl}), we obtain:
$$
-\sum\limits_{k=2}^{+\infty}\varepsilon^{k}\partial^2_{xx}{u}_k^{(i)}(x,\eta) -\sum\limits_{k=2}^{+\infty}\varepsilon^{k-2}\partial^2_{\eta\eta}{u}_k^{(i)}(x,\eta) -\sum\limits_{k=2}^{+\infty}\varepsilon^{k-2}\frac{d^2\omega_k^{(i)}}{dx^2}(x) \approx f(x,\eta), \quad \eta=\frac{y}{\varepsilon},
$$
$$
-\sum\limits_{k=2}^{+\infty}\varepsilon^{k}\partial_\eta{u}_k^{(i)}(x,\pm\frac{h_i}{2}) \approx \varepsilon^2\varphi_\pm^{(i)}(x).
$$
Equating coefficients at the same degrees of $\varepsilon$, we derive recurrence relations to determine coefficients $\{ u_k^{(i)} \}.$
 Let us consider the problem for $u_2^{(i)}:$
\begin{equation}\label{regul_probl_2}
\left\{\begin{array}{rcl}
-\partial_{\eta\eta}^2{u}_2^{(i)}(x,\eta)          & = & f(x,\eta)+\dfrac{d^{\,2}\omega_2^{(i)}}{dx^2}(x), \quad \eta\in\Upsilon_i,\\[2mm]
-\partial_{\eta}u_2^{(i)}(x,\eta)|_{\eta=\pm\frac{h_i}{2}} & = & \varphi_\pm^{(i)}(x), \quad x\in I_i \\[2mm]
\langle u_2^{(i)}(x,\cdot) \rangle_{\Upsilon_i}   & = & 0, \quad x\in I_i.
\end{array}\right.
\end{equation}
Here $\Upsilon_i=\left(-\frac{h_i}{2},\frac{h_i}{2}\right), $ \
$\langle u(x,\cdot) \rangle_{\Upsilon_i} :=  \int_{\Upsilon_i}u (x,\eta)d\eta,$ $\ i=1,2.$
For each number $i$ this is a Neumann problem for the ordinary differential equation with respect to the variable $\eta\in\Upsilon_i$;  variable $x$ is considered here as a parameter. The last relation is added for the uniqueness of the solution.
The solvability condition for the problem (\ref{regul_probl_2}) gives us the differential equation
for function $\omega_2^{(i)}:$
\begin{equation}\label{omega_probl_2}
- h_i\frac{d^2\omega_2^{(i)}}{dx^2}(x) = \int_{\Upsilon_i}f(x,\eta)d\eta - \varphi_+^{(i)}(x) + \varphi_-^{(i)}(x), \quad x\in I_i \quad (i=1, 2).
\end{equation}
Let $\omega_2^{(i)}$ be a solution of the differential equation (\ref{omega_probl_2}) (boundary conditions for this differential equation will be determined later). Then the solution of problem (\ref{regul_probl_2}) is uniquely defined.

To determine the coefficients  $u_3^{(i)}, \  i=1, 2,$ we obtain the following problems
\begin{equation}\label{regul_probl_3}
\left\{\begin{array}{rcl}
-\partial_{\eta\eta}^2{u}_3^{(i)}(x,\eta)               & = & \dfrac{d^{\,2}\omega_3^{(i)}}{dx^2}(x) , \quad \eta\in\Upsilon_i, \\[2mm]
-\partial_{\eta}u_3^{(i)}(x,\eta)|_{\eta=\pm\frac{h_i}{2}} & = & 0,    \quad x\in I_i                        \\[2mm]
\langle u_3^{(i)}(x,\cdot) \rangle_{\Upsilon_i}  & = & 0, \quad x\in I_i.
\end{array}\right.
\end{equation}
Repeating previous arguments, we find that ${u}_3^{(i)}\equiv 0$  and
$\dfrac{d^{\,2}\omega_3^{(i)}}{dx^2}(x)=0$ $x\in I_i,$ $i=1, 2.$

Let us consider boundary-value problems for the functions $u_k^{(i)}, \ k\geq 4, \ i=1, 2 :$
\begin{equation}\label{regul_probl_k}
\left\{\begin{array}{rcl}
-\partial_{\eta\eta}^2{u}_k^{(i)}(x,\eta)    & = & \dfrac{d^{\,2}\omega_k^{(i)}}{dx^2}(x) + \partial_{xx}^2{u}_{k-2}^{(i)}(x,\eta), \quad \eta\in\Upsilon_i, \\[2mm]
-\partial_{\eta}u_k^{(i)}(x,\eta)|_{\eta=\pm\frac{h_i}{2}} & = & 0,     \quad x\in I_i                       \\[2mm]
\langle u_k^{(i)}(x,\cdot) \rangle_{\Upsilon_i}   & = & 0, \quad x\in I_i.
\end{array}\right.
\end{equation}
Assume that we determined all coefficients $u_2^{(i)},\dots,u_{k-1}^{(i)},\omega_2^{(i)},\dots,\omega_{k-1}^{(i)}$ of the expansion (\ref{regul}).
Then from the solvability condition for problem (\ref{regul_probl_k}) it follows that
$$
h_i\frac{d^2\omega_k^{(i)}}{dx^2}(x)=-\int_{\Upsilon_i}\partial_x^2{u}_{k-2}(x,\eta)d\eta= -\partial_x^2\left(\int_{\Upsilon_i}u_{k-2}^{(i)}(x,\eta)d\eta\right)=0,
$$
i.e., $\omega^{(i)}_k$ is a linear function and
\begin{equation}\label{omega_probl_k}
\frac{d^2\omega_k^{(i)}}{dx^2}(x)=0, \quad x\in I_i.
\end{equation}

\begin{remark}
Till now boundary conditions for differential equations (\ref{omega_probl_2}) and (\ref{omega_probl_k}) are unknown. They will be found in the following stages of construction of the asymptotics.
\end{remark}

Thus the solution to problem  (\ref{regul_probl_k}) is uniquely determined, and hence the recurrent procedure for determining
the coefficients of series (\ref{regul}) is uniquely solved.

\begin{remark}
From the recurrent procedure of problems (\ref{regul_probl_k}) it is easy to check that
$u^{(i)}_{2p+1}$ are identically equal to 0 for odd indexes  $k=2p+1, \, p\in\Bbb{N},$ $ i=1,2.$
\end{remark}


 \subsection{Boundary asymptotics near vertical sides of  domain $\Omega_\varepsilon$}

In the previous section the regular asymptotic expansion, which takes into account the right-hand side of the differential equation of problem (\ref{probl}) and the boundary conditions on the horizontal sides of the thin cascade domain $\Omega_\varepsilon,$
was constructed. In this section we will build the boundary parts of the asymptotics, which neutralize the residuals from the regular parts
 of the asymptotics both on the left side of $\Omega_\varepsilon^{(1)}$ and the right one of $\Omega_\varepsilon^{(2)}.$

In a neighborhood of the left vertical side of $\Omega_\varepsilon^{(1)}$ we seek  the asymptotic expansion for the solution in the form
\begin{equation}\label{prim+}
\Pi_\infty^{(1)}:=\sum\limits_{k=0}^{+\infty}\varepsilon^{k}\, \Pi_k^{(1)}\left(\frac{1+x}{\varepsilon},\frac{y}{\varepsilon}\right).
\end{equation}
Substituting (\ref{prim+}) into (\ref{probl}) and collecting coefficients with equal degrees of $\varepsilon$, we obtain the following mixed boundary-value problems
\begin{equation}\label{prim+probl}
 \left\{\begin{array}{rll}
  -\Delta_{\xi\eta}\Pi_k^{(1)}(\xi,\eta)=                      & 0,                  & (\xi,\eta)\in(0,+\infty)\times\Upsilon_1,                                     \\[2mm]
  -\partial_\eta\Pi_k^{(1)}(\xi,\eta)|_{\eta=\pm\frac{h_1}{2}}=& 0,                  & \xi\in(0,+\infty),                                                            \\[2mm]
  \Pi_k^{(1)}(0,\eta)=                                         & \Phi_k^{(1)}(\eta), & \eta\in\Upsilon_1,                                                            \\[2mm]
  \Pi_k^{(1)}(\xi,\eta)\to                                     & 0,                  & \xi\to+\infty,\ \ \eta\in\Upsilon_1,
 \end{array}\right.
\end{equation}\\[3mm]
where
$
\Phi_k^{(1)} =-\omega_{k+2}^{(1)}(-1), \ k=0,1,
$
\
$
\Phi_k^{(1)}(\eta)=-u_k^{(1)}(-1,\eta)-\omega_{k+2}^{(1)}(-1), \ k\geq 2.
$
Here $\xi=\frac{1+x}{\varepsilon}, \ \eta=\frac{y}{\varepsilon}.$\

 Using the method of separation of variables, we find the solution of problem (\ref{prim+probl}) at a fixed index $k:$
\begin{equation}\label{view_solution}
\Pi_k^{(1)}(\xi,\eta)=\sum\limits_{p=0}^{+\infty}\left[a_p^{(1)}e^{-\frac{2p\pi}{h_1}\xi}\cos\left(\frac{2p\pi}{h_1}\eta\right)+ b_p^{(1)}e^{-\frac{(2p+1)\pi}{h_1}\xi}\sin\left(\frac{(2p+1)\pi}{h_1}\eta\right)\right],
\end{equation}
where
$$
a_p^{(1)}=\frac{2}{h_1}\int\limits_{-\frac{h_1}{2}}^{\frac{h_1}{2}}\Phi_k^{(1)}(\eta)\cos\left(\frac{2p\pi}{h_1}\eta\right)d\eta, \quad b_p^{(1)}=\frac{2}{h_1}\int\limits_{-\frac{h_1}{2}}^{\frac{h_1}{2}}\Phi_k^{(1)}(\eta)\sin\left(\frac{(2p+1)\pi}{h_1}\eta\right)d\eta,
$$
$$
a_0^{(1)}=\frac{1}{h_1}\int\limits_{-\frac{h_1}{2}}^{\frac{h_1}{2}}\Phi_k^{(1)}(\eta)d\eta = \frac{1}{h_1}\int\limits_{-\frac{h_1}{2}}^{\frac{h_1}{2}}u_k^{(1)}(-1,\eta)d\eta-\omega_{k+2}^{(1)}(-1)=-\omega_{k+2}^{(1)}(-1).
$$

From the fourth condition in (\ref{prim+probl}) it follows that coefficient $a_0^{(1)}$ must be equal to 0. As a result,
we arrive at  boundary conditions for the functions $\{\omega_{k+2}^{(1)}\}$:
\begin{equation}\label{bv_left}
\omega_{k+2}^{(1)}(-1)=0, \quad k\in\Bbb{N}_0.
\end{equation}

In a neighborhood of the right vertical side of $\Omega_\varepsilon^{(2)}$ we seek  the asymptotic expansion for the solution in the form
\begin{equation}\label{prim-}
\Pi_\infty^{(2)}:=\sum\limits_{k=0}^{+\infty}\varepsilon^{k}\Pi_k^{(2)}\left(\frac{1-x}{\varepsilon},\frac{y}{\varepsilon}\right).
\end{equation}
To determine the coefficients $\{\Pi_k^{(2)}\}_{k\in\Bbb{N}_0}$ we get  the following boundary-value problems:
\begin{equation}\label{prim-probl}
 \left\{\begin{array}{rll}
  -\Delta_{\xi^*\eta}\Pi_k^{(2)}(\xi^*,\eta)=                    & 0,                  & (\xi^*,\eta)\in(0,+\infty)\times\Upsilon_2,                                     \\[2mm]
  -\partial_\eta\Pi_k^{(2)}(\xi^*,\eta)|_{\eta=\pm\frac{h_2}{2}}=& 0,                  & \xi^*\in(0,+\infty),                                                           \\[2mm]
  \Pi_k^{(1)}(0,\eta)=                                           & \Phi_k^{(2)}(\eta), & \eta\in\Upsilon_2,                                                              \\[2mm]
  \Pi_k^{(1)}(\xi^*,\eta)\to                                     & 0,                  & \xi^*\to+\infty, \ \ \eta\in\Upsilon_2,
 \end{array}\right.
\end{equation}\\[3mm]
where
$
\Phi_k^{(2)} =-\omega_{k+2}^{(2)}(1), \ k=0,1;
$
\
$
\Phi_k^{(2)}(\eta)=-u_k^{(2)}(1,\eta)-\omega_{k+2}^{(2)}(1), \ k\geq 2; \
$
$\xi^*=\frac{1-x}{\varepsilon},\eta=\frac{y}{\varepsilon}.$

Similarly we find the solution of the problem (\ref{prim-probl}) at a fixed index $k:$
\begin{equation}\label{view_solution2}
\Pi_k^{(2)}(\xi^*,\eta)=\sum\limits_{p=0}^{+\infty}\left[a_p^{(2)}e^{-\frac{2p\pi}{h_2}\xi^*}\cos\left(\frac{2p\pi}{h_2}\eta\right)+ b_p^{(2)}e^{-\frac{(2p+1)\pi}{h_2}\xi^*}\sin\left(\frac{(2p+1)\pi}{h_2}\eta\right)\right],
\end{equation}
where
$$
a_p^{(2)}=\frac{2}{h_2}\int\limits_{-\frac{h_2}{2}}^{\frac{h_2}{2}}\Phi_k^{(2)}(\eta)\cos\left(\frac{2p\pi}{h_2}\eta\right)d\eta, \quad b_p^{(2)}=\frac{2}{h_2}\int\limits_{-\frac{h_2}{2}}^{\frac{h_2}{2}}\Phi_k^{(2)}(\eta)\sin\left(\frac{(2p+1)\pi}{h_2}\eta\right)d\eta,
$$
$$
a_0^{(2)}=\frac{1}{h_2}\int\limits_{-\frac{h_2}{2}}^{\frac{h_2}{2}}\Phi_k^{(2)}(\eta)d\eta = \frac{1}{h_2}\int\limits_{-\frac{h_2}{2}}^{\frac{h_2}{2}}u_k^{(2)}(1,\eta)d\eta-\omega_{k+2}^{(2)}(1)=-\omega_{k+2}^{(2)}(1).
$$
From the fourth condition in (\ref{prim-probl}) it follows that coefficient $a_0^{(2)}$ must be equal to 0. It is possible when
\begin{equation}\label{bv_right}
\omega_{k+2}^{(2)}(1)=0, \quad k\in\Bbb{N}_0.
\end{equation}

\begin{remark}
Since $u_{k}^{(i)}\equiv0$ for $k=2p+1, \, p\in\Bbb{N}$, functions $\Phi_{2p+1}^{(i)}=0, \  p\in\Bbb{N}.$ As a result,
$$
\Pi_{0}^{(i)}\equiv0, \ \Pi_{2p-1}^{(i)}\equiv0, \quad p\in\Bbb{N}, \quad i=1,2.
$$
 Moreover, from representation (\ref{view_solution}) and (\ref{view_solution2}) it follow the following asymptotic relations
\begin{equation}\label{as_estimates}
 \Pi_k^{(1)}(\xi,\eta)=  {\cal O}(\exp(- \tfrac{\pi}{h_1}\xi)) \ \mbox{as} \ \xi\to+\infty, \quad
 \Pi_k^{(2)}(\xi^*,\eta)=  {\cal O}(\exp(- \tfrac{\pi}{h_2}\xi)) \ \mbox{as} \ \xi^*\to+\infty.
\end{equation}
\end{remark}

Equalities  (\ref{bv_left}) and (\ref{bv_right}) set boundary conditions at points $-1$ and $1$  for all functions $\{\omega_k^{(1)}\}$ and $\{\omega_k^{(2)}\}$, respectively. We find out conditions  for these functions at point $0$  in the next section.


 \subsection{Inner boundary part of the asymptotics}

Let us consider what happens with the regular parts of the asymptotics in the join zone of  two thin domains $\Omega_\varepsilon^{(1)}$ and $\Omega_\varepsilon^{(2)}$. Formally substituting the regular parts $u_\infty^{(1)}$ and $ u_\infty^{(2)} $ into the transmission conditions of  problem (\ref{probl}), we obtain the following relations:
\begin{equation}\label{jumpF}
\sum\limits_{k=2}^{+\infty}\varepsilon^{k}\left(u_k^{(1)}(0,\frac{y}{\varepsilon})+\varepsilon^{-2}\omega_k^{(1)}(0)\right)\approx \sum\limits_{k=2}^{+\infty}\varepsilon^{k}\left(u_k^{(2)}(0,\frac{y}{\varepsilon})+\varepsilon^{-2}\omega_k^{(2)}(0)\right),
\end{equation}
\begin{equation}\label{jumpD}
\sum\limits_{k=2}^{+\infty}\varepsilon^{k}\left(\partial_x u_k^{(1)}(0,\frac{y}{\varepsilon})+\varepsilon^{-2}\frac{d\omega_k^{(1)}}{dx}(0)\right)\approx \sum\limits_{k=2}^{+\infty}\varepsilon^{k}\left(\partial_x u_k^{(2)}(0,\frac{y}{\varepsilon})+\varepsilon^{-2}\frac{d\omega_k^{(2)}}{dx}(0)\right).
\end{equation}
Equating, for example, the corresponding coefficients with the  equal degrees of $\varepsilon$ in (\ref{jumpF}), we get
$$
\omega^{(2)}_k(0)=\omega^{(1)}_k(0), \quad \mbox{when} \ k=2 \ \mbox{ and } \  k=2p+1;
$$
\begin{equation}\label{jumpF_k}
u^{(2)}_k(0,\frac{y}{\varepsilon}) - u^{(1)}_k(0,\frac{y}{\varepsilon}) = \omega^{(2)}_{k+2}(0)-\omega^{(1)}_{k+2}(0), \quad \mbox{when} \ k=2p, \ p\in\Bbb{N}.
\end{equation}
The left-hand side of (\ref{jumpF_k}) is a known quantity that  depends on the "rapid" variable $\frac{y}{\varepsilon}$ and not necessarily equal to 0. Thus it is impossible to choose the constant  $\omega^{(2)}_{k+2}(0)-\omega^{(1)}_{k+2}(0)$ such that equality (\ref{jumpF_k}) is satisfied.

Therefore, we should  introduce an additional inner asymptotic expansion in a neighborhood of the joint zone  to remove residuals that depend on the "rapid" variable  $\frac{y}{\varepsilon}$ in the transmission conditions of problem (\ref{probl}) at the join zone of  two thin domains $\Omega_\varepsilon^{(1)}$ and $\Omega_\varepsilon^{(2)}.$

Inner expansion is  sought in the form:
\begin{equation}\label{junc}
N_\infty=\sum\limits_{k=1}^{+\infty}\varepsilon^k N_k\left(\frac{x}{\varepsilon},\frac{y}{\varepsilon}\right).
\end{equation}

Passing to the coordinates $\xi=\frac{x}{\varepsilon},\ \eta=\frac{y}{\varepsilon}$ in a neighborhood of the joint zone and then forwarding  the parameter $\varepsilon$ to 0, we obtain the following unbounded domain
$$
\Xi=\big((-\infty,0)\times\Upsilon_1\big)\cup\big([0,+\infty)\times\Upsilon_2\big),
$$
which is a union of semi strips
$\Xi^{(1)}=(-\infty,0)\times\Upsilon_1$ and $\Xi^{(2)}=(0,+\infty)\times\Upsilon_2.$

Let us introduce the following notation for parts of the boundary of the domain $\Xi$:
\begin{itemize}
  \item
 $\partial\Xi_\|=\{0\}\times \big(\Upsilon_1\setminus \Upsilon_2\big)$ is the vertical parts of the boundary $\partial\Xi$,
  \item
 $\partial\Xi^{(i)}_=$ is the horizontal parts of the boundary $\partial\Xi^{(i)},$ $i=1, 2,$
  \item
 $\partial\Xi_= = \partial\Xi^{(1)}_= \cup \partial\Xi^{(2)}_=$.
\end{itemize}

Substituting  (\ref{junc}) into (\ref{probl}), taking into account residues  that  leave the regular parts of the asymptotics on the vertical sides and on the joint zone, and equating corresponding coefficients by equal degrees of $\varepsilon$,   we get the following relations for the coefficients $\{N_k\}.$

For even numbers  $k=2p,\ p\in\Bbb{N}$:
\begin{equation}\label{junc_probl_even}
\left\{\begin{array}{rcll}
-\Delta{N_{2p}}                 & = & 0            & \mbox{in} \ \Xi,            \\[2mm]
\partial_\eta{N_{2p}}           & = & 0            & \mbox{on}\ \partial\Xi_=,  \\[2mm]
\partial_\xi{N_{2p}}            & = & \Theta_{2p}  & \mbox{on}\ \partial\Xi_\|, \\[2mm]
[N_{2p}]|_{\xi=0}               & = & \Psi_{2p}    & \mbox{on}\ \Upsilon_2,
\\[2mm]
[\partial_{\xi}N_{2p}]|_{\xi=0} & = & \Phi_{2p}    & \mbox{on}\ \Upsilon_2,
\end{array}\right.
\end{equation}
where
$$
\Theta_{2p}(\eta)=-\frac{d\omega_{2p+1}^{(1)}}{dx}(0), \quad \eta\in\partial\Xi_\|,
$$
$$
\Psi_{2p}(\eta)=u_{2p}^{(1)}(0,\eta)-u_{2p}^{(2)}(0,\eta), \quad \eta\in\Upsilon_2,
$$
$$
\Phi_{2p}(\eta)=\frac{d\omega_{2p+1}^{(1)}}{dx}(0)-\frac{d\omega_{2p+1}^{(2)}}{dx}(0), \quad \eta\in\Upsilon_2.
$$

For odd numbers  $k=2p+1, \, p\in\Bbb{N}_0$:
\begin{equation}\label{junc_probl_odd}
\left\{\begin{array}{rcll}
-\Delta{N_{2p+1}}                 & = & 0             & \mbox{in} \ \Xi,            \\[2mm]
\partial_\eta{N_{2p+1}}           & = & 0             & \mbox{on}\ \partial\Xi_=,  \\[2mm]
\partial_\xi{N_{2p+1}}            & = & \Theta_{2p+1} & \mbox{on}\ \partial\Xi_\|, \\[2mm]
[N_{2p+1}]|_{\xi=0}               & = & 0             & \mbox{on}\       \Upsilon_2,
 \\[2mm]
[\partial_{\xi}N_{2p+1}]|_{\xi=0} & = & \Phi_{2p+1}   & \mbox{on}\ \Upsilon_2,
\end{array}\right.
\end{equation}
where
$$
\Theta_{2p+1}(\eta)=-\partial_x{u}_{2p}^{(1)}(0,\eta)-\frac{d\omega_{2p+2}^{(1)}}{dx}(0), \quad \eta\in\partial\Xi_\|,
$$
$$
\Phi_{2p+1}(\eta)=\partial_x{u}_{2p}^{(1)}(0,\eta)-\partial_x{u}_{2p}^{(2)}(0,\eta)+\frac{d\omega_{2p+2}^{(1)}}{dx}(0)-\frac{d\omega_{2p+2}^{(2)}}{dx}(0), \quad \eta\in\Upsilon_2,
$$
It should be noted here  that $u_0\equiv 0, \ u_1\equiv 0.$

In order to find out whether exist functions that satisfy the relations of problems (\ref{junc_probl_even}) and (\ref{junc_probl_odd}),
at first we study  the solvability the following boundary-value problem:
\begin{equation}\label{junc_probl_general}
\left\{\begin{array}{rcll}
-\Delta{N}(\xi, \eta)  & = & F(\xi, \eta),             &  \quad (\xi, \eta) \in \Xi;
\\[2mm]
\partial_\eta{N}(\xi, \eta)|_{\eta=\pm \frac{h_i}{2}}& = &\pm B^{(i)}_{\pm}(\xi),    & \quad (-1)^i \xi \in (0, +\infty), \ \ i=1, 2;
\\[2mm]
\partial_\xi{N}(\xi, \eta)|_{\xi=0} & = & G(\eta), & \quad \eta \in \Upsilon_1\setminus \Upsilon_2;
\\[2mm]
[N]|_{\xi=0}      & = & \Psi(\eta),             & \quad \eta\in  \Upsilon_2;
 \\[2mm]
[\partial_{\xi}N]|_{\xi=0} & = & \Phi(\eta),   & \quad \eta \in \Upsilon_2.
\end{array}\right.
\end{equation}

 Let $C^{\infty}_{0,\xi}(\overline{\Xi})$ be a space of infinitely differentiable functions in $\overline{\Xi}$ that are finite with respect to $\xi$, i.e.,
$$
\forall \,v\in C^{\infty}_{0,\xi}(\overline{\Xi}) \quad \exists \,R>0 \quad \forall \, (\xi,\eta)\in\overline{\Xi} \quad |\xi|\geq R: \quad v(\xi,\eta)=0.
$$
Define the following space $\mathcal{H}:=\overline{\left( C^{\infty}_{0,\xi}(\overline{\Xi}), \ \| \cdot \|_\mathcal{H} \right)}$, where
$$
\|v\|_\mathcal{H}=\sqrt{\int_\Xi|\nabla v(\xi, \eta)|^2 \, d\xi d\eta+ \int_\Xi |v(\xi, \eta)|^2 |\rho(\xi)|^2 \, d\xi d\eta} ,
$$
and the weight function $\rho(\xi)= (1+|\xi|)^{-1}, \ \xi \in \Bbb R.$

{\bf The case  $\Psi\equiv 0$.}
Let us take an arbitrary function $v\in C^{\infty}_{0,\xi}(\overline{\Xi})$, multiply the  differential equation of problem (\ref{junc_probl_general})
 by $v,$  and integrate this equality over domain $\Xi$. Using the Green-Ostrogradsky formula, we deduce the following integral identity:
\begin{multline}\label{integ_odd}
\int\limits_{\Xi}\nabla N \cdot \nabla v \, d\xi d\eta =
\int\limits_{\Xi} F \, v \, d\xi d\eta + \int^0_{-\infty} B^{(1)}_{\pm}(\xi) \, v(\xi, \pm \tfrac{h_1}{2})\, d\xi
+ \int_0^{+\infty} B^{(2)}_{\pm}(\xi) \, v(\xi, \pm \tfrac{h_2}{2})\, d\xi +
\\
+
\int\limits_{\Upsilon_1\setminus \Upsilon_2} G(\eta) \, v(0,\eta) \, d\eta
 - \int\limits_{\Upsilon_2}   \Phi(\eta) v(0,\eta)\, d\eta.
\end{multline}

\begin{definition}
A function $N$ from the space $\mathcal{H}$ is called a weak solution of problem (\ref{junc_probl_general}) if  the identity  (\ref{integ_odd}) holds for all $v\in\mathcal{H}$.
\end{definition}

From lemma 4.1, remark 4.1 and 4.2 (see \cite{ZAA99}) it follows the following proposition.

\begin{tverd}\label{tverd1}
Let $\rho^{-1} F\in L^2(\Xi),$ $\rho^{-1} B^{(2)}_{\pm} \in L^2(0,+\infty),$ $\rho^{-1} B^{(1)}_{\pm} \in L^2(-\infty,0),$
$G\in L^2(\Upsilon_1\setminus \Upsilon_2)$ and $\Phi\in L^2(\Upsilon_2).$

Then there exist a weak solution of problem (\ref{junc_probl_general}) if and only if
\begin{equation}\label{solvability}
\int\limits_{\Upsilon_2}   \Phi(\eta) \, d\eta =
\int\limits_{\Xi} F \, d\xi d\eta + \int^0_{-\infty} B^{(1)}_{\pm}(\xi) \,  d\xi
+ \int_0^{+\infty} B^{(2)}_{\pm}(\xi) \,  d\xi +
\int\limits_{\Upsilon_1\setminus \Upsilon_2} G(\eta) \,  d\eta.
\end{equation}

In addition, this solution is defined up to an additive constant and we can choose this constant
 such that there will exist a unique solution of the problem (\ref{junc_probl_general}) with the
 following differentiable asymptotics:
\begin{equation}\label{inner_asympt_general}
{\cal N}_0(\xi,\eta)=\left\{
\begin{array}{rl}
{\cal O}(\exp( \frac{\pi}{h_1}\xi)) & \mbox{when} \ \xi\to-\infty,
\\[2mm]
 d_0 + {\cal O}(\exp(-\frac{\pi}{h_2}\xi)) & \mbox{when} \ \xi\to+\infty.
\end{array}
\right.
\end{equation}

Moreover, if the functions $F, \, G, \, \Phi$ are even with respect of $\eta$ $(F, \, G, \, \Phi$ are odd with respect of $\eta)$ and
$B^{(i)}_{-}\equiv B^{(i)}_{+}, \ i=1, 2$ $(B^{(i)}_{-}\equiv - B^{(i)}_{+}, \ i=1, 2),$ then solution ${\cal N}_0$ is even (odd)
function with respect of $\eta.$ If ${\cal N}_0$ is odd function, then in (\ref{inner_asympt_general}) the constant  $d_0$ is equal to zero.
\end{tverd}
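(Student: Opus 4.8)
The plan is to reduce Proposition \ref{tverd1} to the cited results of \cite{ZAA99} by identifying the bilinear form and the linear functional generated by the integral identity (\ref{integ_odd}), and then to read off the solvability condition, the asymptotic expansion, and the symmetry properties one at a time. First I would verify that the left-hand side of (\ref{integ_odd}) defines a bounded, symmetric, nonnegative bilinear form $a(N,v)=\int_\Xi \nabla N\cdot\nabla v$ on $\mathcal H\times\mathcal H$, and that under the stated hypotheses $\rho^{-1}F\in L^2(\Xi)$, $\rho^{-1}B^{(i)}_\pm\in L^2$, $G\in L^2(\Upsilon_1\setminus\Upsilon_2)$, $\Phi\in L^2(\Upsilon_2)$ the right-hand side is a bounded linear functional $\ell(v)$ on $\mathcal H$. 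The volume term $\int_\Xi Fv$ is controlled by Cauchy--Schwarz with the weight $\rho$: $|\int_\Xi Fv|\le\|\rho^{-1}F\|_{L^2(\Xi)}\|\rho v\|_{L^2(\Xi)}\le\|\rho^{-1}F\|_{L^2(\Xi)}\|v\|_{\mathcal H}$; the boundary integrals over the horizontal sides are estimated the same way after a one-dimensional trace/weighted estimate in the $\eta$-variable, and the integrals over $\Upsilon_1\setminus\Upsilon_2$ and $\Upsilon_2$ at $\xi=0$ are handled by an ordinary trace theorem on the bounded cross-section. This is exactly the functional-analytic setup of lemma 4.1 in \cite{ZAA99}.

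Next, the kernel of $a$ on $\mathcal H$ consists of the constants (a function with $\nabla N\equiv0$ on the connected set $\Xi$ is constant, and constants lie in $\mathcal H$ because $\rho\in L^2(\Bbb R)$). Hence by the Fredholm-type alternative for such degenerate forms — which is precisely what remark 4.1 and 4.2 of \cite{ZAA99} provide — a weak solution exists iff $\ell$ annihilates the constants, i.e. $\ell(1)=0$. Writing out $\ell(1)$ from the right-hand side of (\ref{integ_odd}) with $v\equiv1$ gives exactly the compatibility relation (\ref{solvability}), and the solution is then unique up to an additive constant. Fixing the constant by the normalization from \cite{ZAA99} (e.g. requiring $N\to0$ as $\xi\to-\infty$, which is legitimate once the exponential decay below is known) singles out the unique representative.

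For the asymptotic behaviour (\ref{inner_asympt_general}): on each half-strip $\Xi^{(i)}$ the function $N$ solves a homogeneous Neumann problem for the Laplacian with a right-hand side that is $L^2$ with the weight $\rho$, hence in particular the data decay; expanding $N(\xi,\cdot)$ in the eigenfunctions $\{1,\cos(2p\pi\eta/h_i),\sin((2p+1)\pi\eta/h_i)\}$ of the transverse Neumann operator on $\Upsilon_i$ shows that every nonconstant mode decays (resp. grows) like $\exp(\mp \frac{\pi}{h_i}\xi)$, the slowest rate being that of the first nonzero eigenvalue $(\pi/h_i)^2$, while the zeroth (constant-in-$\eta$) mode is a bounded harmonic function of $\xi$ alone, hence an affine function whose slope must vanish for membership in $\mathcal H$; this yields the limit $0$ as $\xi\to-\infty$ and a finite limit $d_0$ as $\xi\to+\infty$, with exponentially small remainders, and the estimates are differentiable because the modal series can be differentiated term by term in the decaying region. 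Finally, the symmetry statement follows by a uniqueness argument: if $F,G,\Phi$ are even in $\eta$ and $B^{(i)}_-\equiv B^{(i)}_+$, then $(\xi,\eta)\mapsto N(\xi,-\eta)$ is also a weak solution with the same data and the same normalization, so it coincides with $N$; the odd case is identical, and in the odd case the zeroth transverse mode — which carries the constant $d_0$ — is absent, forcing $d_0=0$. The main obstacle I anticipate is the careful verification that $\ell$ is bounded on $\mathcal H$, i.e. the weighted trace inequalities on the unbounded horizontal sides $\partial\Xi^{(i)}_=$, since that is where the weight $\rho$ and the unboundedness of $\Xi$ interact; everything else is either bookkeeping or a direct appeal to \cite{ZAA99}.
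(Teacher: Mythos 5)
Your proposal matches the paper's treatment: the paper offers no independent proof of Proposition \ref{tverd1}, deducing it directly from Lemma 4.1 and Remarks 4.1--4.2 of \cite{ZAA99}, which is exactly the weighted-space Fredholm-type machinery, half-strip Fourier analysis and symmetry/uniqueness argument you reconstruct. Your sketch simply spells out the ingredients that the citation supplies, so it is essentially the same approach.
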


From corollary 4.1 (\cite{ZAA99}) it follows the second proposition.

\begin{tverd}\label{tverd2}
There exists a nontrivial solution ${\cal Z}_0$ of the corresponding homogeneous  problem (\ref{junc_probl_general}),
which does not belong to the space $\mathcal{H},$  and this solution has the following differentiable  asymptotics:
\begin{equation}\label{nonenergetic}
{\cal Z}_0(\xi,\eta)=\left\{
\begin{array}{rl}
\dfrac{\xi}{h_1} + C_{h_1} + {\cal O}(\exp( \frac{\pi}{h_1}\xi)) & \mbox{when} \ \xi\to-\infty,
\\[3mm]
\dfrac{\xi}{h_2} + C_{h_2} + {\cal O}(\exp(-\frac{\pi}{h_2}\xi)) & \mbox{when} \ \xi\to+\infty,
\end{array}
\right.
\end{equation}
where  $ C_{h_i}= h^{-1}_i \langle{\cal Z}_0(0,\eta)\rangle_{\Upsilon_i}, \ i=1, 2.$

Moreover, the function ${\cal Z}_0$ is even with respect of variable $\eta$ and any other solution of the homogeneous problem (\ref{junc_probl_general}), which has polynomial growth when $\xi \to \pm\infty,$  is a linear combination $\alpha_1 + \alpha_2  {\cal Z}_0,$ where $\alpha_1$ and $\alpha_2$ are some constants.
\end{tverd}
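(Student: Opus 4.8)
The plan is to obtain $\mathcal{Z}_0$ by peeling off an explicit affine profile that carries the prescribed linear growth and then solving an inhomogeneous problem covered by Proposition~\ref{tverd1}. Fix a function $W\in C^\infty(\mathbb{R})$, depending only on $\xi$, with $W(\xi)=\xi/h_1$ for $\xi\le -1$ and $W(\xi)=\xi/h_2$ for $\xi\ge 1$, and regard $W$ as a function on $\Xi$. Since $W$ is globally smooth, $[W]|_{\xi=0}=0$ and $[\partial_\xi W]|_{\xi=0}=0$; moreover $\partial_\eta W\equiv 0$ and $F_0:=W''$ is bounded with support in $[-1,1]\times(\Upsilon_1\cup\Upsilon_2)$, so $\rho^{-1}F_0\in L^2(\Xi)$. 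I look for $\mathcal{Z}_0$ in the form $\mathcal{Z}_0=W+\mathcal{R}$; then $\mathcal{Z}_0$ solves the homogeneous version of (\ref{junc_probl_general}) if and only if $\mathcal{R}$ solves (\ref{junc_probl_general}) with data $F=F_0$, $B^{(i)}_\pm\equiv 0$, $G\equiv -W'(0)$ on $\Upsilon_1\setminus\Upsilon_2$, $\Psi\equiv 0$ and $\Phi\equiv 0$ on $\Upsilon_2$. In particular we land exactly in the case $\Psi\equiv 0$ to which Proposition~\ref{tverd1} applies.

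Next I would check the solvability condition (\ref{solvability}) for the $\mathcal{R}$-problem. Its right-hand side equals $\int_\Xi W''\,d\xi d\eta+\int_{\Upsilon_1\setminus\Upsilon_2}(-W'(0))\,d\eta$. A one-line computation gives $\int_\Xi W''\,d\xi d\eta=h_1\big(W'(0)-\tfrac{1}{h_1}\big)+h_2\big(\tfrac{1}{h_2}-W'(0)\big)=(h_1-h_2)W'(0)$ and $\int_{\Upsilon_1\setminus\Upsilon_2}(-W'(0))\,d\eta=-(h_1-h_2)W'(0)$, so the sum vanishes, matching $\int_{\Upsilon_2}\Phi\,d\eta=0$. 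Hence Proposition~\ref{tverd1} produces $\mathcal{R}\in\mathcal{H}$, unique up to an additive constant, which after the stated normalization has the differentiable asymptotics (\ref{inner_asympt_general}); adding $W$ yields $\mathcal{Z}_0$ with the differentiable asymptotics (\ref{nonenergetic}). Since $F_0,\,G,\,\Phi$ are even in $\eta$ (the first two being $\eta$-independent) and $B^{(i)}_-\equiv B^{(i)}_+\equiv 0$, the concluding part of Proposition~\ref{tverd1} makes $\mathcal{R}$ even in $\eta$, and $W$ is even, so $\mathcal{Z}_0$ is even. Because $\mathcal{Z}_0$ grows linearly while no element of $\mathcal{H}$ can (one has $|v|^2\rho^2\sim |v|^2/|\xi|^2$ and here $|\nabla \mathcal{Z}_0|\to h_i^{-1}\neq 0$), it follows that $\mathcal{Z}_0\notin\mathcal{H}$ and is in particular nontrivial. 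Finally, setting $m_i(\xi):=h_i^{-1}\langle\mathcal{Z}_0(\xi,\cdot)\rangle_{\Upsilon_i}$, integrating $-\Delta\mathcal{Z}_0=0$ over $\Upsilon_i$ in $\eta$ and using the homogeneous Neumann condition on $\partial\Xi^{(i)}_=$ gives $m_i''\equiv 0$, so $m_i$ is affine; comparing with (\ref{nonenergetic}) forces $m_i(\xi)=\xi/h_i+C_{h_i}$, whence $C_{h_i}=m_i(0)=h_i^{-1}\langle\mathcal{Z}_0(0,\cdot)\rangle_{\Upsilon_i}$.

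For the classification, let $\mathcal{Y}$ be any solution of the homogeneous problem (\ref{junc_probl_general}) with polynomial growth. Flux conservation — integrating $-\Delta\mathcal{Y}=0$ over $\Upsilon_1$ for $\xi<0$ and over $\Upsilon_2$ for $\xi>0$, using the Neumann conditions on $\partial\Xi_=$, the condition $\partial_\xi\mathcal{Y}|_{\xi=0}=0$ on $\Upsilon_1\setminus\Upsilon_2$, and $[\partial_\xi\mathcal{Y}]|_{\xi=0}=0$ on $\Upsilon_2$ — shows that $\int_{\Upsilon_1}\partial_\xi\mathcal{Y}(\xi,\cdot)\,d\eta$ is constant for $\xi<0$ and equals $\int_{\Upsilon_2}\partial_\xi\mathcal{Y}(\xi,\cdot)\,d\eta$, constant for $\xi>0$; call this common value $\alpha_2$ (so $\alpha_2=1$ for $\mathcal{Z}_0$). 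Then $\mathcal{W}:=\mathcal{Y}-\alpha_2\mathcal{Z}_0$ is a homogeneous solution of polynomial growth with zero net flux. Expanding $\mathcal{W}$ on each semistrip in the $\eta$-eigenfunctions of the Neumann problem on $\Upsilon_i$ (separation of variables, as in (\ref{view_solution})), the zeroth modes are affine in $\xi$ with zero slope by the zero-flux property, hence constant, and every higher mode satisfies $g''-\lambda^2 g=0$ with $\lambda>0$, so polynomial growth forces it to be the decaying exponential. Consequently $\mathcal{W}$ is bounded with exponentially decaying gradient at $\pm\infty$, so $\mathcal{W}\in\mathcal{H}$; by Proposition~\ref{tverd1} the homogeneous problem has only constants in $\mathcal{H}$, hence $\mathcal{W}\equiv\alpha_1$ and $\mathcal{Y}=\alpha_1+\alpha_2\mathcal{Z}_0$.

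The main obstacle is this last classification step: one must first show that "polynomial growth" in fact reduces to at most linear growth via the mode-by-mode analysis on the two semistrips, and then match the affine zeroth-order modes on $\Xi^{(1)}$ and $\Xi^{(2)}$ consistently across the junction through flux conservation, so that subtracting a single multiple of $\mathcal{Z}_0$ returns one to $\mathcal{H}$, where the kernel furnished by Proposition~\ref{tverd1} is exhausted by constants. Checking the solvability condition (\ref{solvability}) and the evenness are routine once $W$ is chosen.
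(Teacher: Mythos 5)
Your construction is correct, but it follows a genuinely different route from the paper: the paper does not prove Proposition \ref{tverd2} at all, it simply imports it from Corollary 4.1 of \cite{ZAA99} (just as Proposition \ref{tverd1} is imported from Lemma 4.1 and Remarks 4.1--4.2 there), whereas you derive it self-containedly from Proposition \ref{tverd1} by subtracting an explicit affine profile $W$ (with $W=\xi/h_1$ for $\xi\le-1$, $W=\xi/h_2$ for $\xi\ge 1$) and solving the resulting inhomogeneous problem with $F=W''$, $G=-W'(0)$, $\Psi=\Phi=0$, $B^{(i)}_\pm=0$. Your verification of the solvability condition (\ref{solvability}), the evenness, the identification $C_{h_i}=h_i^{-1}\langle{\cal Z}_0(0,\cdot)\rangle_{\Upsilon_i}$ via the affine cross-sectional means, and the classification of polynomially growing kernel elements (flux conservation to fix the multiple of ${\cal Z}_0$, then mode-by-mode separation of variables on each semistrip to land back in $\mathcal{H}$, where Proposition \ref{tverd1} leaves only constants) are all sound; this is essentially the argument carried out in \cite{ZAA99} itself, so what your write-up buys is a proof that uses only Proposition \ref{tverd1} as a black box instead of a second citation. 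Two points you gloss over and should state explicitly if this were written out in full: (i) after the mode analysis you must check that $\mathcal{W}=\mathcal{Y}-\alpha_2{\cal Z}_0$ really lies in $\mathcal{H}$, i.e.\ a bounded function with exponentially decaying gradient belongs to the closure of $C^{\infty}_{0,\xi}(\overline{\Xi})$ (a routine cut-off argument, since $\rho^2$ is integrable in $\xi$), and (ii) to invoke the uniqueness-up-to-constants of Proposition \ref{tverd1} you need $\mathcal{W}$ to satisfy the integral identity (\ref{integ_odd}) with zero data for all $v\in\mathcal{H}$, which requires an integration by parts on truncated domains plus local $H^1$ regularity near the reentrant corners of the junction; both are standard but are part of the proof.
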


\begin{remark}\label{remark_constant}
Using the second Green-Ostrogradsky formula,  similarly as was done in remark~4.3 (\cite{ZAA99}),
constant $d_0$ from (\ref{inner_asympt_general}) can be found as follows
\begin{multline}\label{const_d_0}
d_0 =
\int\limits_{\Xi} F(\xi, \eta) \, {\cal Z}_0(\xi, \eta) \, d\xi d\eta + \int^0_{-\infty} B^{(1)}_{\pm}(\xi) \, {\cal Z}_0(\xi, \pm \tfrac{h_1}{2})\, d\xi
+ \int_0^{+\infty} B^{(2)}_{\pm}(\xi) \, {\cal Z}_0(\xi, \pm \tfrac{h_2}{2})\, d\xi +
\\
+
\int\limits_{\Upsilon_1\setminus \Upsilon_2} G(\eta) \, {\cal Z}_0(0,\eta) \, d\eta
 - \int\limits_{\Upsilon_2}   \Phi(\eta) \, {\cal Z}_0(0,\eta)\, d\eta .
\end{multline}
\end{remark}

In {\bf the general case} when $\Psi \neq 0,$ the following  substitution
$$
W=N - \chi_\delta \Psi
$$
must be done in the problem (\ref{junc_probl_general}),
where $\chi_\delta \in C^{\infty}(\Bbb{R}_+),\ 0\leq \chi_\delta \leq1$ and
$$
\chi_\delta(\xi)=
\left\{\begin{array}{ll}
0, & \text{when} \ \ \xi <0,
\\
1, & \text{when} \ \ 0\leq\xi\leq \delta,  \quad (\delta>0)
\\
0, & \text{when} \ \ \xi\geq 2\delta.
\end{array}\right.
$$
Then $[W]|_{\xi=0}=0$ and we arrive to the previous case.

\begin{definition} Let $\Psi \in H^1(\Upsilon_2).$
A function $N$ is called a weak solution of problem (\ref{junc_probl_general}), if there exists a function $W$ from $\mathcal{H}$ such that
the following integral identity
\begin{multline}\label{integ_even}
\int\limits_{\Xi}\nabla W \cdot \nabla v \, d\xi d\eta =
\int\limits_{\Xi} F \, v \, d\xi d\eta
- \int\limits_{\Xi^{(2)}}  \nabla\big(\chi_\delta(\xi) \Psi(\eta)\big)  \cdot \nabla v \, d\xi d\eta
+ \int^0_{-\infty} B^{(1)}_{\pm}(\xi) \, v(\xi, \pm \tfrac{h_1}{2})\, d\xi
+
\\
+ \int_0^{+\infty} B^{(2)}_{\pm}(\xi) \, v(\xi, \pm \tfrac{h_2}{2})\, d\xi +
\int\limits_{\Upsilon_1\setminus \Upsilon_2} G(\eta) \, v(0,\eta) \, d\eta
 - \int\limits_{\Upsilon_2}   \Phi(\eta) v(0,\eta)\, d\eta
\end{multline}
holds for all  $v\in\mathcal{H}$.
\end{definition}

\begin{remark}
It is easy to verify that in the general case, the proposition \ref{tverd1} holds with the same solvability condition
(\ref{solvability}). Moreover, the function $\Psi$ must be even (odd) in the last item of Proposition \ref{tverd1}.
\end{remark}

Now we back to problems (\ref{junc_probl_even}) and (\ref{junc_probl_odd}). From (\ref{solvability}) it follows that the following equalities
\begin{equation}\label{cond_t1}
\int\limits_{\partial\Xi_\|}\Theta_k(\eta) d\eta = \int\limits_{\Upsilon_2} \Phi_k(\eta) d\eta, \quad k\in \Bbb N,
\end{equation}
are the corresponding solvability conditions for those problems.

Taking into account that $\langle u_k^{(i)}(x,\cdot) \rangle_{\Upsilon_i}  =  0, \ \  x\in I_i, \ \ i=1, 2,$
from (\ref{cond_t1}) we  derive  the following relations for functions $\{\omega_k^{(i)}\}:$
\begin{equation}\label{transmisiont1}
h_1 \frac{d\omega_{k}^{(1)}}{dx}(0) = h_2 \frac{d\omega_{k}^{(2)}}{dx}(0), \quad k \in \Bbb N, \ \ k\geq 2.
\end{equation}
Hence, if $\{\omega_k^{(i)}\}$ satisfy (\ref{transmisiont1}), then there exist solutions of problems (\ref{junc_probl_even}) and (\ref{junc_probl_odd}). According to Proposition  \ref{tverd1}, we can uniquely choose those solutions such that  they have  the following asymptotics
\begin{equation}\label{inner_asympt}
{\cal N}_{k}(\xi,\eta)=\left\{
\begin{array}{rl}
{\cal O}(\exp( \frac{\pi}{h_1}\xi)) & \mbox{when} \ \xi\to-\infty, \\[2mm]
 d^{+}_{k} + {\cal O}(\exp(-\frac{\pi}{h_2}\xi)) & \mbox{when} \ \xi\to+\infty.
\end{array}
\right.
\end{equation}

In what follows, in asymptotic expansion (\ref{junc}) we will use the functions
$$
N_{k}(\xi,\eta)=
\left\{\begin{array}{ll}
{\cal N}_k(\xi,\eta), & \xi<0, \\[2mm]
{\cal N}_k(\xi,\eta) - d_k^+, & \xi>0,
\end{array}\right.  \quad
k\in \Bbb N.
$$
Then taking into account (\ref{inner_asympt}),  functions $\{N_{k}\}$ are  exponentially decrease  as  $\xi \to \pm \infty.$

Formally substituting the series (\ref{regul}) and (\ref{junc})
in the first transmission condition of problem (\ref{probl}), we get
$$
\left( u^{(1)}_\infty + N_\infty  \right)|_{x=0-}=\left( u^{(2)}_\infty + N_\infty  \right)|_{x=0+},
$$
from where, by virtue of the corresponding equalities in problems (\ref{junc_probl_even}) and (\ref{junc_probl_odd}),
we deduce  the following relations for the functions $\{\omega_k^{(i)}\}:$
$$
\omega_2^{(2)}(0) = \omega_2^{(1)}(0), \qquad
\omega_k^{(2)}(0) - \omega_k^{(1)}(0) = d_{k-2}^+ \ , \quad k\in\Bbb N, \ \ k \geq3.
$$

Thus, we have obtained  a sequence of boundary-value problems to determine functions $\{\omega_k^{(i)}\}.$
For functions $\omega_2^{(1)}$ and $\omega_2^{(2)}$ that form the main term of the regular asymptotic expansion (\ref{regul}),
the problem looks as follows
\begin{equation}\label{main}
\left\{\begin{array}{rcl}
- h_i\dfrac{d^2\omega_2^{(i)}}{dx^2}(x) & = &  \widehat{F}^{(i)}(x),  \qquad x\in I_i,   \ \ i=1, 2,
\\[3mm]
\omega_2^{(1)}(0) & = & \omega_2^{(2)}(0),
\\[2mm]
h_1\dfrac{d\omega_2^{(1)}}{dx}(0) & = & h_2\dfrac{d\omega_2^{(2)}}{dx}(0),
\\[4mm]
\omega_2^{(1)}(-1) & = & 0, \quad \omega_2^{(2)}(1)  \ = \ 0 ,
\end{array}\right.
\end{equation}
where
\begin{equation}\label{right-hand-side}
\widehat{F}^{(i)}(x):= \int_{\Upsilon_i}f(x,\eta)\, d\eta - \varphi_+^{(i)}(x) + \varphi_-^{(i)}(x), \quad \ x\in I_i, \ \ i=1, 2.
\end{equation}
The problem (\ref{main})  will be called  {\it  homogenized problem} for  problem (\ref{probl}).

For next functions $\{\omega_k^{(1)}, \  \omega_k^{(2)}: \  k \geq 3\}$ we get the following  problems:
\begin{equation}\label{omega_probl*}
\left\{\begin{array}{rcll}
- h_i\dfrac{d^2\omega_k^{(i)}}{dx^2}(x) & = & 0,  & x\in I_i,   \ \ i=1, 2,
\\[3mm]
\omega_k^{(1)}(0) & = & \omega_k^{(2)}(0) - d_{k-2}^+,
\\[2mm]
h_1\dfrac{d\omega_k^{(1)}}{dx}(0) & = & h_2\dfrac{d\omega_k^{(2)}}{dx}(0),
\\[4mm]
\omega_k^{(1)}(-1) & = & 0, \quad \omega_k^{(2)}(1)  \ = \ 0 .
\end{array}\right.
\end{equation}
It is easy to find the unique solution to problem (\ref{omega_probl*}) at a fixed index $k:$
\begin{equation}\label{solutions}
\omega_k^{(1)}(x) = - \frac{h_2 \, d_{k-2}^{+}}{h_1+h_2} \, (x + 1), \ \ x\in I_1; \quad
\omega_k^{(2)}(x) =  \frac{h_1\, d_{k-2}^{+}}{h_1+h_2} \, (1 - x), \ \ x\in I_2.
\end{equation}

\section{Scheme of construction of the complete asymptotics and its justification}

Let us introduce the following notation
$$
u_k\big(x,\frac{y}{\varepsilon}\big)=
\left\{\begin{array}{rl}
u_k^{(1)}\left(x,\frac{y}{\varepsilon}\right), & (x,y)\in\Omega_\varepsilon^{(1)} \\[2mm]
u_k^{(2)}\left(x,\frac{y}{\varepsilon}\right), & (x,y)\in\Omega_\varepsilon^{(2)}
\end{array}\right. ,
\quad
\omega_k(x)=
\left\{\begin{array}{rl}
\omega_k^{(1)}(x), & x\in I_1 \\[2mm]
\omega_k^{(2)}(x), & x\in I_2
\end{array}\right. , \quad k \in \Bbb N, \ k \ge 2.
$$

From homogenized problem (\ref{main}) we uniquely determine the main term of the asymptotics $\omega_2$ of the series (\ref{regul}).
Then from problems (\ref{regul_probl_2}) that can now be rewritten as
\begin{equation}\label{new_regul_probl_2}
\left\{\begin{array}{rcl}
-\partial_{\eta\eta}^2{u}_2^{(i)}(x,\eta)          & = & f(x,\eta) - h_i^{-1} \widehat{F}^{(i)}(x), \quad \eta\in\Upsilon_i,\\[2mm]
-\partial_{\eta}u_2^{(i)}(x,\eta)|_{\eta=\pm\frac{h_i}{2}} & = & \varphi_\pm^{(i)}(x), \quad x\in I_i \\[2mm]
\langle u_2^{(i)}(x,\cdot) \rangle_{\Upsilon_i}   & = & 0, \quad x\in I_i,
\end{array}\right. \qquad i=1, 2,
\end{equation}
we uniquely determine
\begin{equation}\label{solution_t}
u_2^{(i)}(x,\eta)=-\int_{-\frac{h_i}{2}}^\eta(\eta-t)\Big(f(x,t)
- h_i^{-1} \widehat{F}^{(i)}(x) \Big)dt - \eta\varphi_-^{(i)}(x) + \alpha_2^{(i)}(x),
\end{equation}
where function $\alpha_2^{(i)}$ are uniquely determined from third condition in (\ref{new_regul_probl_2}), i.e.
$$
\alpha_2^{(i)}(x) = \int_{\Upsilon_i} \int_{-\frac{h_i}{2}}^\eta(\eta-t) \, f(x,t)\, dt\, d\eta
-  6^{-1} h_i^2 \widehat{F}^{(i)}(x), \quad  i= 1, 2;
$$
functions $\widehat{F}^{(1)}$ and $\widehat{F}^{(2)}$ are given by formulas (\ref{right-hand-side}).

Now with the help of formulas (\ref{view_solution}) and (\ref{view_solution2}), we find the first terms $\Pi_2^{(1)}$ and $\Pi_2^{(2)}$ of the
boundary-asymptotic expansions (\ref{prim+})
and  (\ref{prim-}) respectively; they are solutions of problems  (\ref{prim+probl}) and (\ref{prim-probl}) that can be rewritten as follows
\begin{equation}\label{new_prim+probl}
 \left\{\begin{array}{rll}
  -\Delta_{\xi\eta}\Pi_2^{(1)}(\xi,\eta)=                      & 0,                  & (\xi,\eta)\in(0,+\infty)\times\Upsilon_1,                                     \\[2mm]
  -\partial_\eta\Pi_2^{(1)}(\xi,\eta)|_{\eta=\pm\frac{h_1}{2}}=& 0,                  & \xi\in(0,+\infty),                                                            \\[2mm]
  \Pi_2^{(1)}(0,\eta)=                                         & -u_2^{(1)}(-1,\eta), & \eta\in\Upsilon_1,                                                            \\[2mm]
  \Pi_2^{(1)}(\xi,\eta)\to                                     & 0,                  & \xi\to+\infty,\ \ \eta\in\Upsilon_1,
 \end{array}\right.
\end{equation}
\begin{equation}\label{new_prim-probl}
 \left\{\begin{array}{rll}
  -\Delta_{\xi^*\eta}\Pi_2^{(2)}(\xi^*,\eta)=                    & 0,                  & (\xi^*,\eta)\in(0,+\infty)\times\Upsilon_2,                                     \\[2mm]
  -\partial_\eta\Pi_2^{(2)}(\xi^*,\eta)|_{\eta=\pm\frac{h_2}{2}}=& 0,                  & \xi^*\in(0,+\infty),                                                           \\[2mm]
  \Pi_2^{(1)}(0,\eta)=                                           & -u_2^{(2)}(1,\eta), & \eta\in\Upsilon_2,                                                              \\[2mm]
  \Pi_2^{(1)}(\xi^*,\eta)\to                                     & 0,                  & \xi^*\to+\infty, \ \ \eta\in\Upsilon_2.
 \end{array}\right.
\end{equation}

Then we find the first term of the inner asymptotic expansion (\ref{junc})
$$
N_{1}(\xi,\eta)=
\left\{\begin{array}{ll}
{\cal N}_1(\xi,\eta), & \xi<0, \\[2mm]
{\cal N}_1(\xi,\eta) - d_1^+, & \xi>0,
\end{array}\right. ,
$$
where ${\cal N}_1$ is the unique solution of the problem (\ref{junc_probl_odd}) that can now be rewritten as
\begin{equation}\label{new_junc_probl_odd}
\left\{\begin{array}{rcll}
-\Delta{{\cal N}_{1}}                 & = & 0             & \mbox{in} \ \Xi,
\\[2mm]
\partial_\eta{{\cal N}_{1}}           & = & 0             & \mbox{on}\ \partial\Xi_=,
\\[1mm]
\partial_\xi{{\cal N}_{1}}            & = & -\dfrac{d\omega_{2}^{(1)}}{dx}(0) & \mbox{on}\ \partial\Xi_\|,
\\[3mm]
[{\cal N}_{1}]|_{\xi=0}               & = & 0             & \mbox{on}\       \Upsilon_2,
 \\[2mm]
[\partial_{\xi}{\cal N}_{1}]|_{\xi=0} & = & \dfrac{d\omega_{2}^{(1)}}{dx}(0)-\dfrac{d\omega_{2}^{(2)}}{dx}(0)   & \mbox{on}\ \Upsilon_2,
\end{array}\right.
\end{equation}
with asymptotics (\ref{inner_asympt}). Recall that constant  $d_1^+$ is also uniquely determined (see remark~\ref{remark_constant}).

Thus we have uniquely determined the first terms of the asymptotic expansions (\ref{regul}), (\ref{prim+}),  (\ref{prim-}) and (\ref{junc}).

Assume that we have determined coefficients  $\omega_2,\ldots,\omega_{2n-2},$
$u_2, \, u_4,\ldots,u_{2n-2}$ of the series (\ref{regul}),
coefficients
$\Pi^{(i)}_2,  \Pi^{(i)}_4,\ldots,\Pi^{(i)}_{2n-2}$ of the series (\ref{prim+}) and (\ref{prim-}) respectively and
coefficients
$N_1,\ldots,N_{2n-3}$ of the series (\ref{junc}).

Then, using formulas (\ref{solutions}),  we write the solution $\omega_{2n-1}$ of problem (\ref{omega_probl*})  with the constant  $d^+_{2n-3}$
 in the first transmission condition.
Further we find the coefficient
$$
N_{2n-2}(\xi,\eta)=
\left\{\begin{array}{ll}
{\cal N}_{2n-2}(\xi,\eta), & \xi<0, \\[2mm]
{\cal N}_{2n-2}(\xi,\eta) - d_{2n-2}^+, & \xi>0,
\end{array}\right. ,
$$
of the inner asymptotic expansion (\ref{junc}),
where ${\cal N}_{2n-2}$ is the unique solution of the problem  (\ref{junc_probl_odd}) that can now be rewritten as
\begin{equation}\label{junc_probl_odd_2n-2}
\left\{\begin{array}{rcll}
-\Delta{{\cal N}_{2n-2}}                 & = & 0             & \mbox{in} \ \Xi,
\\[2mm]
\partial_\eta{{\cal N}_{2n-2}}           & = & 0             & \mbox{on}\ \partial\Xi_=,
\\[1mm]
\partial_\xi{{\cal N}_{2n-2}}            & = &  \dfrac{h_2 \, d^+_{2n-3}}{h_1 + h_2} & \mbox{on}\ \partial\Xi_\|,
\\[3mm]
[{\cal N}_{2n-2}]|_{\xi=0}               & = & u_{2n-2}^{(1)}(0,\eta)-u_{2n-2}^{(2)}(0,\eta)             & \mbox{on}\       \Upsilon_2,
 \\[2mm]
[\partial_{\xi}{\cal N}_{2n-2}]|_{\xi=0} & = & \dfrac{d^+_{2n-3} (h_1 - h_2) }{h_1+h_2}   & \mbox{on}\ \Upsilon_2,
\end{array}\right.
\end{equation}
and ${\cal N}_{2n-2}$ has the asymptotics (\ref{inner_asympt}).

Knowing $d_{2n-2}^+$ and using (\ref{solutions}), we get  the solution $\omega_{2n}$ of problem (\ref{omega_probl*}).
Next coefficient
$$
N_{2n-1}(\xi,\eta)=
\left\{\begin{array}{ll}
{\cal N}_{2n-1}(\xi,\eta), & \xi<0, \\[2mm]
{\cal N}_{2n-1}(\xi,\eta) - d_{2n-1}^+, & \xi>0,
\end{array}\right. ,
$$
of the inner asymptotic expansion (\ref{junc}) is defined with the help of  solution ${\cal N}_{2n-1}$ to  problem (\ref{junc_probl_odd})
that can now  be rewritten as follows
\begin{equation}\label{new_junc_probl_odd}
\left\{\begin{array}{rcll}
-\Delta{{\cal N}_{2n-1}}                 & = & 0             & \mbox{in} \ \Xi,            \\[2mm]
\partial_\eta{{\cal N}_{2n-1}}           & = & 0             & \mbox{on}\ \partial\Xi_=,  \\[2mm]
\partial_\xi{{\cal N}_{2n-1}}            & = & -\partial_x{u}_{2n-2}^{(1)}(0,\eta) + \dfrac{h_2 \, d^+_{2n-2}}{h_1 + h_2} & \mbox{on}\ \partial\Xi_\|, \\[2mm]
[{\cal N}_{2n-1}]|_{\xi=0}               & = & 0             & \mbox{on}\       \Upsilon_2,
 \\[2mm]
[\partial_{\xi}{\cal N}_{2n-1}]|_{\xi=0} & = &  \partial_x{u}_{2n-2}^{(1)}(0,\eta)-\partial_x{u}_{2n-2}^{(2)}(0,\eta)+
\dfrac{d^+_{2n-2} (h_1 - h_2) }{h_1+h_2}
 & \mbox{on}\ \Upsilon_2.
\end{array}\right.
\end{equation}

Coefficients $u_{2n}^{(i)}, \ i=1, 2,$ are determined as  solutions of following  problems
\begin{equation}\label{new_regul_probl_k}
\left\{\begin{array}{rcl}
-\partial_{\eta\eta}^2{u}_{2n}^{(i)}(x,\eta)    & = &  \partial_{xx}^2{u}_{2n-2}^{(i)}(x,\eta), \quad \eta\in\Upsilon_i, \\[2mm]
-\partial_{\eta}u_{2n}^{(i)}(x,\eta)|_{\eta=\pm\frac{h_i}{2}} & = & 0,     \quad x\in I_i                       \\[2mm]
\langle u_{2n}^{(i)}(x,\cdot) \rangle_{\Upsilon_i}   & = & 0, \quad x\in I_i.
\end{array}\right.
\end{equation}

And finally, we find the coefficients  $\Pi_{2n}^{(1)}$ and $\Pi_{2n}^{(2)}$ of the boundary asymptotic expansions (\ref{prim+})
and (\ref{prim-}) respectively as the solutions of problems (\ref{prim+probl}) and (\ref{prim-probl}) that can be rewritten now as follows
\begin{equation}\label{new_prim+probl}
 \left\{\begin{array}{rll}
  -\Delta_{\xi\eta}\Pi_{2n}^{(1)}(\xi,\eta)=                      & 0,                  & (\xi,\eta)\in(0,+\infty)\times\Upsilon_1,                                     \\[2mm]
  -\partial_\eta\Pi_{2n}^{(1)}(\xi,\eta)|_{\eta=\pm\frac{h_1}{2}}=& 0,                  & \xi\in(0,+\infty),                                                            \\[2mm]
  \Pi_{2n}^{(1)}(0,\eta)=                                         & -u_{2n}^{(1)}(-1,\eta), & \eta\in\Upsilon_1,                                                            \\[2mm]
  \Pi_{2n}^{(1)}(\xi,\eta)\to                                     & 0,                  & \xi\to+\infty,\ \ \eta\in\Upsilon_1,
 \end{array}\right.
\end{equation}
\begin{equation}\label{new_prim-probl}
 \left\{\begin{array}{rll}
  -\Delta_{\xi^*\eta}\Pi_{2n}^{(2)}(\xi^*,\eta)=                    & 0,                  & (\xi^*,\eta)\in(0,+\infty)\times\Upsilon_2,                                     \\[2mm]
  -\partial_\eta\Pi_{2n}^{(2)}(\xi^*,\eta)|_{\eta=\pm\frac{h_2}{2}}=& 0,                  & \xi^*\in(0,+\infty),                                                           \\[2mm]
  \Pi_{2n}^{(1)}(0,\eta)=  & -u_{2n}^{(2)}(1,\eta), & \eta\in\Upsilon_2,                                                              \\[2mm]
  \Pi_{2n}^{(1)}(\xi^*,\eta)\to                                     & 0,                  & \xi^*\to+\infty, \ \ \eta\in\Upsilon_2.
 \end{array}\right.
\end{equation}

Thus, we can successively determine all coefficients of series (\ref{regul}), (\ref{prim+}),  (\ref{prim-}) and (\ref{junc}).

With the help of the series (\ref{regul}), (\ref{prim+}),  (\ref{prim-}) and (\ref{junc}) we construct the
following series
$$
\omega_2(x) + \sum\limits_{k=1}^{+\infty} \varepsilon^{2k-1}\Bigg( \omega_{2k+1}(x) + \chi^0(x) {N}_{2k-1}\left(\frac{x}{\varepsilon},\frac{y}{\varepsilon}\right) \Bigg) +
\sum\limits_{k=1}^{+\infty} \varepsilon^{2k}\Bigg( u_{2k}\left(x,\frac{y}{\varepsilon}\right) + \omega_{2k+2}(x) +
$$
\begin{equation}\label{asymp_expansion}
 + \chi^0(x){N}_{2k}\left(\frac{x}{\varepsilon},\frac{y}{\varepsilon}\right) +  \chi^-(x)\Pi^{(1)}_{2k}\Big(\frac{1+x}{\varepsilon},\frac{y}{\varepsilon}\Big) + \chi^+(x)\Pi^{(2)}_{2k}\Big(\frac{1-x}{\varepsilon},\frac{y}{\varepsilon}\Big) \Bigg), \quad (x,y)\in\Omega_\varepsilon,
\end{equation}
where $\chi^\pm, \ \chi^0$ are smooth cut-off functions defined by formulas
$$
\chi^\pm(x)=
\left\{\begin{array}{ll}
1, & \text{if} \ \ |1\mp x| \le \delta,
\\
0, & \text{if} \ \ |1\mp x| \ge 2 \delta,
\end{array}\right.
\quad
\chi^0(x)=
\left\{\begin{array}{ll}
1, & \text{if} \ \ |x| < \delta,
\\
0, & \text{if} \ \ |x|  > 2\delta.
\end{array}\right.
$$
Here $\delta$ is an arbitrary sufficiently small fixed positive number.

\bigskip

\begin{theorem}\label{mainTheorem}
 Series $(\ref{asymp_expansion})$ is the asymptotic expansion for the solution of the boundary-value problem $(\ref{probl})$
 in the Sobolev space $H^1(\Omega_\varepsilon).$
Moreover, the following asymptotic estimates
\begin{equation}\label{t0}
    \forall \, m \in\Bbb{N} \ \ \exists \, {C}_m >0 \ \ \exists \, \varepsilon_0>0 \ \ \forall\, \varepsilon\in(0, \varepsilon_0) : \qquad \|u_\varepsilon-U_\varepsilon^{(m)}\|_{H^1(\Omega_\varepsilon)} \leq {C}_m \ \varepsilon^{2\, m+\frac{1}{2}}
\end{equation}
hold, where
$$
U^{(m)}_{\varepsilon}(x,y) = \omega_2(x) + \sum\limits_{k=1}^{m} \varepsilon^{2k-1}\left( \omega_{2k+1}(x) + \chi^0(x){N}_{2k-1}\left(\frac{x}{\varepsilon},\frac{y}{\varepsilon}\right) \right) +
\sum\limits_{k=1}^{m} \varepsilon^{2k}\Bigg( u_{2k}\left(x,\frac{y}{\varepsilon}\right) + \omega_{2k+2}(x)
$$
\begin{equation}\label{aaN}
+ \chi^0(x){N}_{2k}\left(\frac{x}{\varepsilon},\frac{y}{\varepsilon}\right) + \chi^-(x)\Pi^{(1)}_{2k}\Big(\frac{1+x}{\varepsilon},\frac{y}{\varepsilon}\Big) + \chi^+(x)\Pi^{(2)}_{2k}\Big(\frac{1-x}{\varepsilon},\frac{y}{\varepsilon}\Big) \Bigg), \quad (x,y)\in\Omega_\varepsilon,
\end{equation}
is the partial sum of $(\ref{asymp_expansion}).$
\end{theorem}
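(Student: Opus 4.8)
The plan is to proceed by the standard substitution-and-estimate scheme for matched asymptotics. First I would fix $m\in\mathbb N$ and plug the partial sum $U_\varepsilon^{(m)}$ from \eqref{aaN} into problem \eqref{probl}, computing the residuals: the discrepancy $-\Delta U_\varepsilon^{(m)} - f(x,y/\varepsilon)$ in $\Omega_\varepsilon$, the boundary discrepancies $-\partial_y U_\varepsilon^{(m)}|_{y=\pm\varepsilon h_i/2} - \varepsilon\varphi_\pm^{(i)}$ on the horizontal sides, the traces $U_\varepsilon^{(m)}|_{x=\pm1}$, and the transmission discrepancies $[U_\varepsilon^{(m)}]|_{x=0}$, $[\partial_x U_\varepsilon^{(m)}]|_{x=0}$. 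By the very construction of the coefficients $\{\omega_k\}$, $\{u_k^{(i)}\}$, $\{\Pi_k^{(i)}\}$, $\{N_k\}$ — each defined exactly so as to cancel, order by order in $\varepsilon$, the residuals left by the previous terms in the differential equation, in the Neumann conditions \eqref{regul_probl_2}--\eqref{regul_probl_k}, in the vertical-side conditions \eqref{prim+probl}--\eqref{prim-probl}, and in the transmission conditions via \eqref{junc_probl_even}--\eqref{junc_probl_odd} — all discrepancies up to order $\varepsilon^{2m}$ cancel identically. What remains are (i) the top-order tail terms (involving $\partial_{xx}^2 u_{2m}^{(i)}$ and the $\varepsilon$-powers beyond the truncation), which are $O(\varepsilon^{2m+1})$ pointwise; (ii) commutator terms coming from the cut-off functions $\chi^0,\chi^\pm$ hitting $N_k$, $\Pi_k^{(i)}$ — these are supported on the fixed annuli $\{\delta<|x|<2\delta\}$, $\{\delta<|1\mp x|<2\delta\}$, where by the exponential-decay estimates \eqref{as_estimates} and \eqref{inner_asympt} the functions $N_k$, $\Pi_k^{(i)}$ and their derivatives are $O(\exp(-c/\varepsilon))$, hence superalgebraically small; and (iii) the ``cross'' residuals where, e.g., the inner expansion $N_\infty$ leaves a small trace on the horizontal sides or the boundary layers $\Pi^{(i)}$ interact with the regular part near $x=0$ — again exponentially small by the same decay. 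The traces at $x=\pm1$ are zero by \eqref{bv_left}, \eqref{bv_right} together with the choice $\Pi_k^{(1)}(0,\eta)=-u_k^{(1)}(-1,\eta)-\omega_{k+2}^{(1)}(-1)$, so that the regular part plus boundary layer vanish there.

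Next I would set $R_\varepsilon := u_\varepsilon - U_\varepsilon^{(m)}$ and write the integral identity \eqref{int-identity} for $u_\varepsilon$ minus the corresponding identity (with residuals) for $U_\varepsilon^{(m)}$, obtaining
\begin{equation*}
\int_{\Omega_\varepsilon} \nabla R_\varepsilon \cdot \nabla \psi \, dx\,dy = \langle \mathcal F_\varepsilon, \psi\rangle \qquad \forall \, \psi \in H^1(\Omega_\varepsilon), \ \psi|_{x=\pm1}=0,
\end{equation*}
where $\mathcal F_\varepsilon$ collects all the residuals listed above as a bulk term, horizontal-boundary terms, and a transmission term at $x=0$. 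Note $R_\varepsilon|_{x=\pm1}=0$, so $R_\varepsilon$ is an admissible test function; take $\psi = R_\varepsilon$. The left side is $\|\nabla R_\varepsilon\|_{L^2(\Omega_\varepsilon)}^2$. To bound the right side I would estimate each piece in terms of $\|R_\varepsilon\|_{L^2(\Omega_\varepsilon)}$ and the boundary traces of $R_\varepsilon$, then invoke a Friedrichs/Poincaré inequality on the thin domain $\Omega_\varepsilon$ — which, because of the Dirichlet conditions at $x=\pm1$, gives $\|R_\varepsilon\|_{L^2(\Omega_\varepsilon)} \le C\|\nabla R_\varepsilon\|_{L^2(\Omega_\varepsilon)}$ with $C$ independent of $\varepsilon$ — and a trace inequality on the horizontal sides of the form $\|v\|_{L^2(\partial\Omega_\varepsilon^{\mathrm{hor}})}^2 \le C(\varepsilon^{-1}\|v\|_{L^2(\Omega_\varepsilon)}^2 + \varepsilon\|\partial_y v\|_{L^2(\Omega_\varepsilon)}^2)$, again with $\varepsilon$-explicit constant. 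The crucial bookkeeping is the $\varepsilon$-powers: the bulk residual is $O(\varepsilon^{2m+1})$ in $L^\infty$ over a domain of measure $O(\varepsilon)$, so its $L^2$-norm is $O(\varepsilon^{2m+1}\cdot\varepsilon^{1/2}) = O(\varepsilon^{2m+3/2})$; paired with $\|R_\varepsilon\|_{L^2}$ this contributes $O(\varepsilon^{2m+3/2})\|\nabla R_\varepsilon\|_{L^2}$. The horizontal-boundary residual — note the problem data carries a factor $\varepsilon$ in front of $\varphi_\pm^{(i)}$, and the construction absorbs it — yields an $\varepsilon$-surplus that, after the trace inequality, produces $O(\varepsilon^{2m+1/2})\|\nabla R_\varepsilon\|_{L^2}$. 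The exponentially small commutator and cross terms are negligible. Dividing through by $\|\nabla R_\varepsilon\|_{L^2}$ gives $\|\nabla R_\varepsilon\|_{L^2(\Omega_\varepsilon)} \le C_m \varepsilon^{2m+1/2}$, and then Friedrichs gives the same bound for $\|R_\varepsilon\|_{L^2(\Omega_\varepsilon)}$, whence \eqref{t0} for the full $H^1$-norm. That \eqref{asymp_expansion} is the asymptotic expansion in $H^1(\Omega_\varepsilon)$ is exactly the statement that the $m$-th partial sum approximates $u_\varepsilon$ with error $o(\varepsilon^{2m})$, which \eqref{t0} delivers a fortiori.

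The main obstacle, and the step I would spend the most care on, is the precise $\varepsilon$-tracking of the trace term on the horizontal sides and the resulting power $\varepsilon^{2m+1/2}$ (rather than $\varepsilon^{2m+1}$ or $\varepsilon^{2m+3/2}$): one has to check that the trace inequality constant $\varepsilon^{-1}$ exactly offsets one power of $\varepsilon$ from the thin-domain measure and that no better cancellation is available, which pins the exponent at $2m+\tfrac12$; the ``$\tfrac12$'' is the familiar loss from estimating an $O(\varepsilon)$-measure integral by Cauchy--Schwarz. A secondary technical point is verifying uniformity of the Friedrichs and trace constants as $\varepsilon\to0$ on $\Omega_\varepsilon$ — handled by rescaling $y = \varepsilon\eta$ to the fixed reference cascade $\Xi$-type domain (two rectangles of unit-order size) where the inequalities are classical, then scaling back and tracking powers. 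Everything else — the cancellation of interior residuals, the exponential smallness of the boundary-layer and inner-layer remainders outside their localization zones, and the vanishing of the $x=\pm1$ traces — follows directly from the constructions in Section 2 and the decay estimates \eqref{as_estimates}, \eqref{inner_asympt}, so those I would state briefly and move on.
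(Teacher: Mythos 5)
Your overall scheme---substitute $U^{(m)}_\varepsilon$ into (\ref{probl}), collect residuals, test the problem for $W_\varepsilon=u_\varepsilon-U^{(m)}_\varepsilon$ with $W_\varepsilon$ itself, use a Friedrichs inequality with $\varepsilon$-uniform constant and the exponential decay of $N_k,\Pi^{(i)}_k$ to dismiss the cut-off commutators---is the same as the paper's. However, the quantitative bookkeeping, which you yourself single out as the decisive step, is wrong in two places that happen to compensate. First, the uncanceled residuals have order $\varepsilon^{2m}$, not $\varepsilon^{2m+1}$: the bulk residual is $\varepsilon^{2m}\partial^2_{xx}u_{2m}(x,y/\varepsilon)$ (it would be removed only by the omitted term $\varepsilon^{2m+2}u_{2m+2}$ via (\ref{new_regul_probl_k})), and at the joint the partial sum leaves $\overline{R}^{(m)}_\varepsilon=\varepsilon^{2m}\bigl(\partial_x u^{(1)}_{2m}(0,\tfrac{y}{\varepsilon})-\tfrac{h_2}{h_1+h_2}d^+_{2m}\bigr)$ in the Neumann condition on $\Upsilon^{(1)}_\varepsilon\setminus\Upsilon^{(2)}_\varepsilon$ and $\widehat{R}^{(m)}_\varepsilon=\varepsilon^{2m}\bigl([\partial_x u_{2m}]|_{x=0}+\tfrac{h_2-h_1}{h_1+h_2}d^+_{2m}\bigr)$ in the jump of $\partial_x$ across $\Upsilon^{(2)}_\varepsilon$ (removable only by the omitted $\varepsilon^{2m+1}N_{2m+1}$); so your claim that all discrepancies cancel through order $\varepsilon^{2m}$ and the rest is $O(\varepsilon^{2m+1})$ overshoots by one power. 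Second, there is no residual at all on the horizontal sides: $u_2$ reproduces $\varepsilon\varphi^{(i)}_\pm$ exactly, every $u_{2k}$ with $k\ge 2$ has zero $\eta$-derivative at $\eta=\pm h_i/2$, and $\omega_k$, $N_k$, $\Pi^{(i)}_k$ contribute nothing there; hence the term you designate as dominant---a horizontal-boundary residual pushed through a trace inequality with constant $\varepsilon^{-1}$---does not exist. Your derivation therefore lands on the exponent $2m+\tfrac12$ for the wrong reason.

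In the paper the exponent arises simply from converting sup-norm bounds of the genuine residuals (see (\ref{t2}), (\ref{t3})) into $L^2$-norms: the bulk term contributes $\varepsilon^{2m}\,|\Omega_\varepsilon|^{1/2}\sim\varepsilon^{2m+\frac12}$, and the two residuals supported on the cross-section $x=0$ contribute $\varepsilon^{2m}\,|\Upsilon^{(1)}_\varepsilon|^{1/2}\sim\varepsilon^{2m+\frac12}$; the trace $\|W_\varepsilon(0,\cdot)\|_{L^2(\Upsilon^{(1)}_\varepsilon)}$ is then controlled by $\|\partial_x W_\varepsilon\|_{L^2(\Omega_\varepsilon)}$ with an $\varepsilon$-independent constant, using the Dirichlet condition at $x=\pm1$ and integration along $x$, so no $\varepsilon$-weighted trace inequality on the horizontal boundary enters anywhere. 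To repair your argument: correct the order of the residuals to $\varepsilon^{2m}$, drop the horizontal-boundary term, and instead retain the two $x=0$ terms in the integral identity for $W_\varepsilon$ (they are exactly the fourth and sixth lines of the system (\ref{nevyazka})), estimating them with the $\varepsilon^{1/2}$ measure factor of the thin cross-section as above.
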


\begin{remark}
Hereinafter, all constants in inequalities are independent of  the parameter~$\varepsilon.$
\end{remark}

\begin{proof} Consider an arbitrary $m\in\Bbb{N}$. Substituting the partial sum  $ U^{(m)}_{\varepsilon}$
into equations and boundary conditions of the problem (\ref{probl}) and considering  relations
(\ref{main})--(\ref{new_prim-probl}) that satisfied by the coefficients of the series (\ref{asymp_expansion}), we find
$$
\Delta U^{(m)}_{\varepsilon}(x,y) + f\left(x,\frac{y}{\varepsilon}\right) =  \varepsilon^{2m}\partial_{xx}u_{2m}\left(x,\frac{y}{\varepsilon}\right) +
$$
$$
+ \sum\limits_{k=1}^{m} \varepsilon^{2k-1} \Big(
2 \varepsilon^{-1}\frac{d\chi^0}{dx}(x)\partial_{\xi} {N}_{2k-1}(\xi,\eta) + \frac{d^2\chi^0}{dx^2}(x) {N}_{2k-1}(\xi,\eta)
\Big)|_{\xi=\frac{x}{\varepsilon},\, \eta=\frac{y}{\varepsilon}} +
$$
$$
+
\sum\limits_{k=1}^{m} \varepsilon^{2k}\Big(2 \varepsilon^{-1} \frac{d\chi^0}{dx}(x) \partial_{\xi}{N}_{2k}(\xi,\eta) + \frac{d^2\chi^0}{dx^2}(x){N}_{2k}(\xi,\eta) +
$$
$$
+ 2 \varepsilon^{-1}\frac{d\chi^-}{dx}(x) \partial_{\xi}\Pi^{(1)}_{2k}(\xi,\eta) + \frac{d^2\chi^-}{dx^2}(x)\Pi^{(1)}_{2k}(\xi,\eta) +
$$
\begin{equation}\label{t1}
 2 \varepsilon^{-1}\frac{d\chi^+}{dx}(x)\partial_{\xi}\Pi^{(2)}_{2k}(\xi,\eta) + \frac{d^2\chi^0}{dx^2}(x)\Pi^{(2)}_{2k}(\xi,\eta) \Big)|_{\xi=\frac{1-x}{\varepsilon},\, \eta=\frac{y}{\varepsilon}} =: R^{(m)}_\varepsilon(x,y).
\end{equation}

Taking into account the exponential decreasing of  functions $\{{N}_{k},  \Pi^{(1)}_{k}, \Pi^{(2)}_{k}\}$ (see (\ref{inner_asympt}),
(\ref{as_estimates})), we conclude that
\begin{equation}\label{t2}
\exists\, \check{C}_m  \ \ \exists \, \varepsilon_0 \ \ \forall\, \varepsilon\in(0, \varepsilon_0) : \quad
\limsup\limits_{(x,y)\in\Omega^{(i)}_{\varepsilon}}\left|R_{\varepsilon}^{(m)}(x,y)\right| \leq \check{C}_m\varepsilon^{2m} \quad
(i=1, 2).
\end{equation}

It is easy  to check that the partial sum leaves the following residuals
$$
\begin{array}{rcll}
-\partial_y{U_\varepsilon^{(m)}}(x,y)|_{y=\pm{\varepsilon\frac{h_i}{2}}}        & = & {\varepsilon\varphi_\pm^{(i)}}(x), & x\in{I_i},
  \\[2mm]
U_\varepsilon^{(m)}(\pm1,y)  & = & 0, & y\in\Upsilon_\varepsilon^{(i)},
\\[2mm]
\partial_x{U_\varepsilon^{(m)}}(x,y)|_{x=0}     & = &
\varepsilon^{2m}\left(\partial_x u_{2m}^{(1)}\left(0,\frac{y}{\varepsilon}\right) -
\dfrac{h_2}{h_1+ h_2}\, d^+_{2m}\right)=:\overline{R}_\varepsilon^{(m)}(y), & y\in\Upsilon_\varepsilon^{(1)}\backslash\Upsilon_\varepsilon^{(2)},
\end{array}
$$
in the boundary conditions $(i=1,2)$ and the following ones
$$
\begin{array}{rcll}
[U_\varepsilon^{(m)}]|_{x=0}  & = & 0,   & y\in\Upsilon_\varepsilon^{(2)},
\\[2mm]
[\partial_x U_\varepsilon^{(m)}]|_{x=0}     & = &
\varepsilon^{2m}\left([\partial_x u_{2m}]|_{x=0} + \dfrac{h_2-h_1}{h_1+ h_2}\, d^+_{2m}\right)=:\widehat{R}_\varepsilon^{(m)}(y), & y\in\Upsilon_\varepsilon^{(2)}
\end{array}
$$
in transmission conditions.
Obviously that there exist positive constants $\overline{C}_m$ and  $\overline{\varepsilon}_0$ such that
\begin{equation}\label{t3}
     \forall\, \varepsilon\in(0, \overline{\varepsilon}_0):\quad \limsup\limits_{y\in\Upsilon_\varepsilon^{(1)}\backslash\Upsilon_\varepsilon^{(2)}}\left|\overline{R}_{\varepsilon}^{(m)}(y)\right| \leq \overline{C}_m\varepsilon^{2m}, \quad
\limsup\limits_{y\in\Upsilon_\varepsilon^{(2)}}\left|\widehat{R}_{\varepsilon}^{(m)}(y)\right| \leq \overline{C}_m \varepsilon^{2m}.
\end{equation}

Thus, the difference $W_\varepsilon := u_\varepsilon - U_\varepsilon^{(m)}$ satisfies the following system:
\begin{equation}\label{nevyazka}
\left\{\begin{array}{rcll}
-\Delta W_\varepsilon                                    & = & R_\varepsilon^{(m)}             & \mbox{in} \ \Omega_\varepsilon,                                                             \\[2mm]
\partial_y W_\varepsilon(x,\pm\varepsilon\frac{h_i}{2}) & = & 0,                              & x\in I_i, \, i=1,2,                                                                        \\[2mm]
W_\varepsilon(\pm 1,y)                                   & = & 0,                              & y\in\Upsilon_\varepsilon^{(i)}, \ i=1,2,                                                   \\[2mm]
- \partial_x W_\varepsilon (0,y)                           & = &  \overline{R}_\varepsilon^{(m)}, & y\in\Upsilon_\varepsilon^{(1)}\backslash\Upsilon_\varepsilon^{(2)},                        \\[2mm]
[W_\varepsilon]|_{x=0}                                   & = & 0,                              & y\in\Upsilon_\varepsilon^{(2)},                                                            \\[2mm]
[\partial_x W_\varepsilon]|_{x=0}                        & = & - \widehat{R}_\varepsilon^{(m)},  & y\in\Upsilon_\varepsilon^{(2)}.                                                            \\[2mm]
\end{array}\right.
\end{equation}
This means that the series (\ref{asymp_expansion}) is a formal asymptotic solution of problem (\ref{probl}).

From (\ref{nevyazka}) we derive the following integral relation:
$$
 \int \limits_{\Omega_\varepsilon} {|\nabla W_\varepsilon |}^2 dxdy =
 \int \limits_{\Omega_\varepsilon} R_\varepsilon^{(m)} \, W_\varepsilon \,dx dy -
 \int \limits_{\Upsilon_\varepsilon^{(1)}\backslash\Upsilon_\varepsilon^{(2)}}  \overline{R}_\varepsilon^{(m)}\, (W_\varepsilon |_{x=0}) \,dy + \int \limits_{\Upsilon_\varepsilon^{(2)}}  \widehat{R}_\varepsilon^{(m)}\, (W_\varepsilon |_{x=0}) \,dy.
$$
Now, using the Friedrichs inequality  and estimates  (\ref{t2}) and (\ref{t3}), we deduce from the previous integral equality the
following inequality
$$
\int \limits_{\Omega_\varepsilon} {|\nabla W_\varepsilon |}^2 dxdy
\leq \check{C}_m \sqrt{h_1+h_2} \ \varepsilon^{2m+\frac{1}{2}} \|W_\varepsilon\|_{L^2(\Omega_\varepsilon)} + \overline{C}_m \sqrt{h_1} \ \varepsilon^{2m+\frac{1}{2}} \|W_\varepsilon (0, \cdot)\|_{L^2(\Upsilon_\varepsilon^{(1)})} \leq
$$
$$
\leq {C}_m \, \varepsilon^{2m+\frac{1}{2}} \|\nabla W_\varepsilon\|_{L^2(\Omega_\varepsilon)},
$$
which means the asymptotic estimates (\ref{t0}) are satisfied. Those asymptotic estimates justify  the constructed asymptotics
and imply  that series $(\ref{asymp_expansion})$ is the asymptotic expansion for the solution of problem $(\ref{probl})$
 in $H^1(\Omega_\varepsilon).$
\end{proof}

\begin{corollary}\label{Corollary}
 For difference between solution $u_\varepsilon$ of problem (\ref{probl}) and solution $\omega_2$ of the homogenized problem (\ref{main})
the following asymptotic estimates
\begin{equation}\label{t4}
 \|u_\varepsilon - \omega_2\|_{L^2(\Omega_\varepsilon)} \leq {C}_0 \, \varepsilon^\frac{3}{2},\qquad
 \|u_\varepsilon - \omega_2\|_{H^1(\Omega_\varepsilon)} \leq {C}_0 \, \varepsilon,
\end{equation}
\begin{equation}\label{t5}
 \left\|u_\varepsilon-\omega_2-\varepsilon(\omega_{3} + \chi^0 {N}_{1})\right\|_{H^1(\Omega_\varepsilon)} \leq
 C_1 \, \varepsilon^\frac{3}{2},
\end{equation}
\begin{equation}\label{t6}
 \|E^{(i)}_\varepsilon(u_\varepsilon) - \omega_2\|_{L^2(I_i)} \leq {C}_2 \, \varepsilon,\qquad
 \|E^{(i)}_\varepsilon(u_\varepsilon) - \omega_2\|_{H^1(I_i)} \leq {C}_2 \, \varepsilon^\frac{1}{2}, \ \ i=1, 2,
\end{equation}
\begin{equation}\label{t7}
\max_{x\in \overline{I}_i} \left| E^{(i)}_\varepsilon(u_\varepsilon)(x) - \omega_2(x)\right|  \leq {C}_3 \, \varepsilon^\frac{1}{2}, \ \ i=1, 2,
\end{equation}
hold,  where
$$
E^{(i)}_\varepsilon(u_\varepsilon)(x) = \frac{1}{\varepsilon\, h_i}\int_{\Upsilon^{(i)}_\varepsilon} u_\varepsilon(x,y)\, dy, \quad i=1, 2.
$$
\end{corollary}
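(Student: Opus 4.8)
The plan is to deduce every estimate of Corollary~\ref{Corollary} from Theorem~\ref{mainTheorem} taken with $m=1$, combined with crude $\varepsilon$-dependent norm bounds for the individual terms that make up the partial sum $U^{(1)}_\varepsilon$ of (\ref{aaN}). For $m=1$ that partial sum equals $\omega_2+\varepsilon(\omega_3+\chi^0 N_1)+\varepsilon^2\mathcal{R}_\varepsilon$, where $\mathcal{R}_\varepsilon$ denotes the sum $u_2(x,\tfrac y\varepsilon)+\omega_4(x)+\chi^0(x)N_2(\tfrac x\varepsilon,\tfrac y\varepsilon)+\chi^-(x)\Pi^{(1)}_2(\tfrac{1+x}\varepsilon,\tfrac y\varepsilon)+\chi^+(x)\Pi^{(2)}_2(\tfrac{1-x}\varepsilon,\tfrac y\varepsilon)$, so
\[
u_\varepsilon-\omega_2 = \bigl(u_\varepsilon-U^{(1)}_\varepsilon\bigr) + \varepsilon\,\bigl(\omega_3+\chi^0 N_1\bigr) + \varepsilon^2\,\mathcal{R}_\varepsilon .
\]
By (\ref{t0}) the first term on the right is $O(\varepsilon^{5/2})$ in $H^1(\Omega_\varepsilon)$, so the estimates (\ref{t4}) and (\ref{t5}) reduce to bounding the $L^2(\Omega_\varepsilon)$- and $H^1(\Omega_\varepsilon)$-norms of the remaining terms.

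These bounds come from elementary changes of variables. A function of $x$ alone (such as $\omega_3,\omega_4$, which are affine by (\ref{solutions})) has both its $L^2(\Omega_\varepsilon)$- and $H^1(\Omega_\varepsilon)$-norms of order $\sqrt\varepsilon$ since $|\Omega_\varepsilon|=O(\varepsilon)$. For a term $v(x,y/\varepsilon)$ with $v$ smooth, the substitution $\eta=y/\varepsilon$ gives $\|v(\cdot,\tfrac\cdot\varepsilon)\|_{L^2(\Omega_\varepsilon)}=O(\sqrt\varepsilon)$ and $\|\partial_x[v(\cdot,\tfrac\cdot\varepsilon)]\|_{L^2(\Omega_\varepsilon)}=O(\sqrt\varepsilon)$, whereas $\|\partial_y[v(\cdot,\tfrac\cdot\varepsilon)]\|_{L^2(\Omega_\varepsilon)}=\varepsilon^{-1}\|(\partial_\eta v)(\cdot,\tfrac\cdot\varepsilon)\|_{L^2(\Omega_\varepsilon)}=O(\varepsilon^{-1/2})$; hence $\|u_2(\cdot,\tfrac\cdot\varepsilon)\|_{L^2(\Omega_\varepsilon)}=O(\sqrt\varepsilon)$ but $\|u_2(\cdot,\tfrac\cdot\varepsilon)\|_{H^1(\Omega_\varepsilon)}=O(\varepsilon^{-1/2})$. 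For the inner and boundary-layer terms one uses the exponential decay estimates (\ref{inner_asympt}) and (\ref{as_estimates}): after rescaling the half-strips by $(\xi,\eta)=(x/\varepsilon,y/\varepsilon)$, respectively $((1\pm x)/\varepsilon,y/\varepsilon)$, the relevant integrals converge, which yields $\|\chi^0 N_k(\tfrac\cdot\varepsilon,\tfrac\cdot\varepsilon)\|_{L^2(\Omega_\varepsilon)}=O(\varepsilon)$, $\|\chi^0 N_k(\tfrac\cdot\varepsilon,\tfrac\cdot\varepsilon)\|_{H^1(\Omega_\varepsilon)}=O(1)$, and likewise for the $\Pi^{(i)}_k$; the additional contributions produced by differentiating the cut-offs $\chi^0,\chi^\pm$ are supported where the argument of the layer function is $\geq\delta/\varepsilon$, hence are exponentially small in $\varepsilon$. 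Collecting these, $\|\varepsilon(\omega_3+\chi^0 N_1)\|_{H^1(\Omega_\varepsilon)}=O(\varepsilon)$, $\|\varepsilon(\omega_3+\chi^0 N_1)\|_{L^2(\Omega_\varepsilon)}=O(\varepsilon^{3/2})$, $\|\varepsilon^2\mathcal{R}_\varepsilon\|_{H^1(\Omega_\varepsilon)}=O(\varepsilon^{3/2})$ and $\|\varepsilon^2\mathcal{R}_\varepsilon\|_{L^2(\Omega_\varepsilon)}=O(\varepsilon^{5/2})$. The triangle inequality applied to the identity above then gives (\ref{t4}); and since $u_\varepsilon-\omega_2-\varepsilon(\omega_3+\chi^0 N_1)=(u_\varepsilon-U^{(1)}_\varepsilon)+\varepsilon^2\mathcal{R}_\varepsilon$, the same estimates give (\ref{t5}).

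For the averaged estimates (\ref{t6}) and (\ref{t7}) I would first record two boundedness properties of $E^{(i)}_\varepsilon$. Cauchy--Schwarz in $y$ yields $\|E^{(i)}_\varepsilon(v)\|_{L^2(I_i)}\le(\varepsilon h_i)^{-1/2}\|v\|_{L^2(\Omega_\varepsilon^{(i)})}$, and since differentiation under the integral sign gives $\tfrac{d}{dx}E^{(i)}_\varepsilon(v)=E^{(i)}_\varepsilon(\partial_x v)$, also $\|E^{(i)}_\varepsilon(v)\|_{H^1(I_i)}\le C\varepsilon^{-1/2}\|v\|_{H^1(\Omega_\varepsilon^{(i)})}$. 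Because $\omega_2$ restricted to $\Omega_\varepsilon^{(i)}$ does not depend on $y$, one has $E^{(i)}_\varepsilon(\omega_2)=\omega_2$ on $I_i$, so $E^{(i)}_\varepsilon(u_\varepsilon)-\omega_2=E^{(i)}_\varepsilon(u_\varepsilon-\omega_2)$; applying the two operator bounds together with (\ref{t4}) produces (\ref{t6}). Finally (\ref{t7}) follows from the one-dimensional Sobolev embedding $H^1(I_i)\hookrightarrow C(\overline I_i)$, $\|g\|_{C(\overline I_i)}\le C\|g\|_{H^1(I_i)}$, applied to $g=E^{(i)}_\varepsilon(u_\varepsilon)-\omega_2$ and the second inequality of (\ref{t6}).

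The only delicate point is the bookkeeping of powers of $\varepsilon$ under the anisotropic rescalings, most notably that applying $\partial_y$ to a function of $y/\varepsilon$ costs a factor $\varepsilon^{-1}$: this is precisely what makes the term $\varepsilon^2 u_2(\cdot,\cdot/\varepsilon)$ only $O(\varepsilon^{3/2})$ rather than $O(\varepsilon^2)$ in $H^1(\Omega_\varepsilon)$, and thereby fixes the exponent $3/2$ in (\ref{t5}). One must also verify that the cut-off functions do not degrade the rates (they do not, being bounded with derivatives supported where the layer functions are exponentially small) and that the factor $\varepsilon^{-1/2}$ lost in the averaging operator is harmless because the averaged differences are already small by (\ref{t4}). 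All remaining computations are routine.
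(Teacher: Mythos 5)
Your proposal is correct and follows essentially the same route as the paper: both rest on estimate (\ref{t0}) with $m=1$, a triangle-inequality decomposition of $u_\varepsilon-\omega_2$ through the partial sum $U^{(1)}_\varepsilon$, anisotropic rescaling bounds for $u_2$, $\omega_3,\omega_4$, $N_k$ and $\Pi^{(i)}_k$ (with the $\varepsilon^{-1}$ cost of $\partial_y$ on functions of $y/\varepsilon$ fixing the exponent $3/2$), then Cauchy--Schwarz for the averaging operator $E^{(i)}_\varepsilon$ to get (\ref{t6}) and the embedding $H^1(I_i)\hookrightarrow C(\overline I_i)$ for (\ref{t7}). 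The only cosmetic difference is that the paper first establishes (\ref{t5}) and then deduces (\ref{t4}) from it via the bounds on $\chi^0 N_1$, whereas you obtain both from the same decomposition at once.
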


\begin{proof}
Since functions ${N}_{2},$ $\Pi^{(1)}_{2}$ and $\Pi^{(2)}_{2}$ exponentially decrease at infinity, from estimate (\ref{t0}) at $m=1$
we deduce inequality~(\ref{t5}):
$$
 \left\|u_\varepsilon-\omega_2-\varepsilon(\omega_{3} + \chi^0 {N}_{1})\right\|_{H^1(\Omega_\varepsilon)} \leq
$$
$$
\leq \left\|u_\varepsilon-U^{(1)}_{\varepsilon}\right\|_{H^1(\Omega_\varepsilon)} + \left\|U^{(1)}_{\varepsilon}-\omega_2-\varepsilon(\omega_{3} + \chi^0 {N}_{1})\right\|_{H^1(\Omega_\varepsilon)} \leq
$$
$$
\leq \widetilde{C}_1 \, \varepsilon^{\frac{5}{2}} +
 \varepsilon^{2} \left\|  u_{2} + \omega_{4} +
 \chi^0 {N}_{2} + \chi^- \Pi^{(1)}_{2} + \chi^+ \Pi^{(2)}_{2}  \right\|_{H^1(\Omega_\varepsilon)} \leq
$$
$$
\leq
\varepsilon^{2} \left\| \nabla_{x}\big( u_{2} + \omega_{4} +
 \chi^0 {N}_{2} + \chi^- \Pi^{(1)}_{2} + \chi^+ \Pi^{(2)}_{2} \big) \right\|_{L^2(\Omega_\varepsilon)}
+
$$
$$
+ \varepsilon^{2} \left\|  u_{2} + \omega_{4} +
 \chi^0 {N}_{2} + \chi^- \Pi^{(1)}_{2} + \chi^+ \Pi^{(2)}_{2}  \right\|_{L^2(\Omega_\varepsilon)}
+ \widetilde{C}_1 \, \varepsilon^{\frac{5}{2}}
 \leq
$$
$$
\leq
 \varepsilon^{\frac32} \sum_{i=1}^2 \Big(\int\limits_{I_i\times \Upsilon_i} |\partial_\eta u_2(x,\eta)|^2 dxd\eta\Big)^{\frac12}
 +
 $$
 $$
 + \varepsilon^{\frac{5}{2}} \sum_{i=1}^2 \Big( c_2 \|  \omega_{4}\|_{H^1(I_i)}
 +  \|  u_{2} \|_{L^2(I_i\times \Upsilon_i)} + \| \partial_x u_{2} \|_{L^2(I_i\times \Upsilon_i)}\Big) +
$$
$$
+\varepsilon^{2} \left(\| \nabla_{\xi \eta} {N}_{2}\|_{L^2(\Xi)}
+ \| \nabla_{\xi \eta} \Pi^{(1)}_{2}\|_{L^2((0,+\infty)\times \Upsilon_1)}
+\| \nabla_{\xi^* \eta} \Pi^{(2)}_{2} \|_{L^2((0,+\infty)\times \Upsilon_2 )}\right)
+
$$
\begin{equation}\label{t8}
+ \varepsilon^{3} \left(\|  {N}_{2}\|_{L^2(\Xi)}
+ \|  \Pi^{(1)}_{2}\|_{L^2((0,+\infty)\times \Upsilon_1)}
+\|  \Pi^{(2)}_{2} \|_{L^2((0,+\infty)\times \Upsilon_2 )}\right)
+ \widetilde{C}_1 \, \varepsilon^{\frac{5}{2}} \leq
 {C}_1 \, \varepsilon^\frac{3}{2}.
\end{equation}

Taking into account that
$\| \chi^0 {N}_{1}\|_{L^2(\Omega_\varepsilon)} \leq c_2 \, \varepsilon$
and $\| \chi^0 {N}_{1}\|_{H^1(\Omega^{(i)}_\varepsilon)} \leq c_3,\ i=1, 2,$
from (\ref{t5}) we get (\ref{t4}).

Using the Cauchy-Buniakovsky inequality, from (\ref{t4}) we derive inequalities (\ref{t6}).
Since the space $H^1(I_i)$ continuously embedded in $C(\overline{I}_i),$
 from the second inequality in (\ref{t6}) it follows inequality (\ref{t7}).
\end{proof}

\begin{remark}\label{zauv}
Summands of order $\varepsilon^\frac{3}{2}$
in the sixth line of $(\ref{t8})$ bring the main contribution to the determination of the constant $C_1$ in inequality $(\ref{t5}).$
Taking  the explicit form of the coefficients $u_2^{(i)}$ $($see $(\ref{solution_t}))$ into account, we can specify the dependence of this constant on the right-hand sides of problem $(\ref{probl}):$
\begin{multline}\label{t9}
 \sum_{i=1}^2 \Big(\int_{I_i\times \Upsilon_i} |\partial_\eta u_2(x,\eta)|^2 dxd\eta\Big)^{\frac12} \leq
 \\
 \leq \sum_{i=1}^2 \sqrt{h_i} \left( \sqrt{5 h_i} \|f\|_{L^2(I_i\times \Upsilon_i)} + 2\sqrt{2} \|\varphi_-^{(i)}\|_{L^2(I_i)} + \sqrt{6} \|\varphi_+^{(i)}\|_{L^2(I_i)} \right).
\end{multline}

Terms of order $\varepsilon^2$  in the eighth line of  $(\ref{t8})$ import next contribution to the constant $C_1.$
Let us estimate those terms. From the corresponding integral identity for the solution $N_2$ $($see $(\ref{integ_even})),$ we derive
$$
   \| \nabla_{\xi \eta} {N}_{2}\|_{L^2(\Xi)} \leq c(\delta)\|\Psi_2\|_{L^2(\Upsilon_2)} + \|\Psi^\prime_2\|_{L^2(\Upsilon_2)} +
   2 d^+_1 \frac{h_2 \sqrt{h_1} + h_1 \sqrt{h_2}}{h_1 + h_2} \leq
$$
$$
\leq 2 \sum_{i=1}^2 \Bigg( h_i \Big( c(\delta) \sqrt{2} h_i\sqrt{1+h_i^2} + \sqrt{5} \Big) \left( \|f\|_{L^2(I_i\times \Upsilon_i)} + \|\partial_x f\|_{L^2(I_i\times \Upsilon_i)} \right) +
$$
$$
   + \sqrt{h_i}\Big( c(\delta) h_i \sqrt{3+2h_i^2} + 2\sqrt{2} \Big)
   \left( \|\varphi_-^{(i)}\|_{L^2(I_i)} + \|\partial_x \varphi_-^{(i)}\|_{L^2(I_i)} \right) +
$$
$$
   + \sqrt{h_i}\Big( c(\delta) h_i \sqrt{3+2h_i^2} + \sqrt{6} \Big)
   \left( \|\varphi_+^{(i)}\|_{L^2(I_i)} + \|\partial_x \varphi_+^{(i)}\|_{L^2(I_i)} \right)
     \Bigg) +
$$
\begin{equation} \label{t10}
   + 2 d^+_1 \frac{h_2 \sqrt{h_1} + h_1 \sqrt{h_2}}{h_1 + h_2},
\end{equation}
where $\Psi_2(\eta)= u^{(1)}_2(0,\eta) - u^{(2)}_2(0,\eta).$
Similarly we can  estimate values $\| \nabla_{\xi \eta} \Pi^{(1)}_{2}\|_{L^2((0,+\infty)\times \Upsilon_1)}$ and
$\| \nabla_{\xi^* \eta} \Pi^{(2)}_{2} \|_{L^2((0,+\infty)\times \Upsilon_2 )}.$
\end{remark}
\begin{remark}
If $\varphi^{(i)}_\pm \equiv 0$ and function $f$ in the right-hand side of the differential equation of problem
$(\ref{probl})$ depends only on the variable $x,$ then all coefficients $\{u_{2k}\},$ $\{\Pi^{(1)}_{2k}\}$ and $\{\Pi^{(2)}_{2k}\}$
are equal to 0. In this case the asymptotic series $(\ref{asymp_expansion})$ has the following form
\begin{equation}\label{asymp_expansion1}
\omega_2(x) + \sum\limits_{k=1}^{+\infty} \varepsilon^{k}\Bigg( \omega_{k+2}(x) + \chi^0(x) {N}_{k}\left(\frac{x}{\varepsilon},\frac{y}{\varepsilon}\right) \Bigg), \quad (x,y)\in\Omega_\varepsilon,
\end{equation}
and asymptotic estimate $(\ref{t5})$ is of order $\varepsilon^2.$
Moreover, the value $\| \nabla_{\xi \eta} {N}_{2}\|_{L^2(\Xi)},$ which is now bounded by the value $2 d^+_1 \frac{h_2 \sqrt{h_1} + h_1 \sqrt{h_2}}{h_1 + h_2}$   $($see $(\ref{t10})),$ bring the main contribution to the constant $C_1$ in $(\ref{t5}).$
\end{remark}

\section{Discussion of the results}

{\bf 1.} Estimate (\ref{t5}) shows the structure of the corrector in the asymptotic approximation for the solution $u_\varepsilon$  of the problem (\ref{probl}). It (corrector) has the form
$$
\varepsilon \left(\omega_{3}(x)  + \chi^0(x) {N}_{1}\big(\tfrac{x}{\varepsilon}, \tfrac{y}{\varepsilon} \big) \right),
$$
and its gradient is equal
$$
\chi^0(x) \nabla_{\xi\eta}{N}_{1}(\xi, \eta)|_{\xi=\frac{x}{\varepsilon}, \eta= \frac{y}{\varepsilon}} +
\varepsilon \left(\omega^\prime_{3}(x)  + \big(\chi^0(x)\big)^\prime {N}_{1}\big(\tfrac{x}{\varepsilon}, \tfrac{y}{\varepsilon} \big) \right).
$$
Since function ${N}_{1}$  exponentially decreases  at infinity (see  (\ref{inner_asympt})), then
$$
\varepsilon \left\| \omega_{3}  + \chi^0\, {N}_{1} \right\|_{H^1(\Omega_\varepsilon)} \le C_0 \varepsilon.
$$
Thus, for this structure of thin cascade junctions  there are not any significant boundary effects
in a neighborhood the join zone  for  the solution $u_\varepsilon$ of problem (\ref{probl}). This also indicates second estimate in (\ref{t4}) and uniform pointwise estimate (\ref{t7}).

The results obtained give the right, in terms of practical application, to replace the complex boundary-value problem (\ref{probl}) with
the corresponding simple boundary-value problem (\ref{main}) with sufficient accuracy that measured by the parameter $\varepsilon$
characterizing the thickness.

Moreover, in this paper we  show how the constant $C_1$ in the main asymptotic estimate (\ref{t5}) depends on the right-hand sides of  problem (\ref{probl}) and on the geometrical parameters $h_1,$  $h_2$ and $d^+_1$ (see remark~\ref{zauv}).
Also it is possible to indicate how other constants  in the asymptotic estimates from Corollary~\ref{Corollary}
depend on these values. This fact makes it possible to directly use the asymptotic estimates for the approximation of solutions of boundary-value problems in thin cascade domains instead of numerical calculations.

\medskip

{\bf 2.} The method proposed in this paper  for the construction of  asymptotic expansions can be applied
 without substantial changes to the asymptotic study of boundary value-problems in thin cascade
  domains with more complex structures, namely, either
\begin{figure}[htbp]
\begin{center}
\includegraphics[width=13cm]{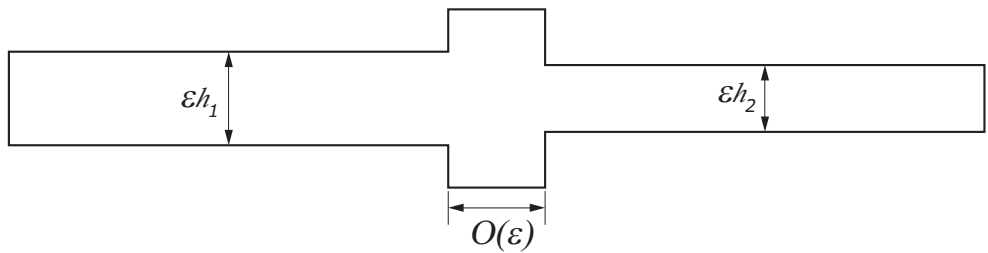}
\caption{Thin cascade domain with local widening}
\end{center}
\end{figure}
thin cascade domains with local widening (narrowing)  in a neighborhood of the join zone  (see Fig.~3),
\begin{figure}[htbp]
\begin{center}
\includegraphics[width=13cm]{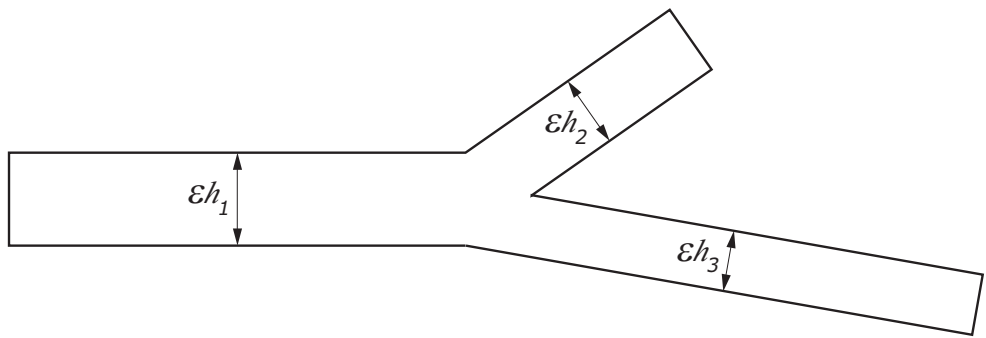}
\caption{Thin cascade domain of graph type}
\end{center}
\end{figure}
or thin cascade domains of graph type  (see Fig.~4),
\begin{figure}[htbp]
\begin{center}
\includegraphics[width=13cm]{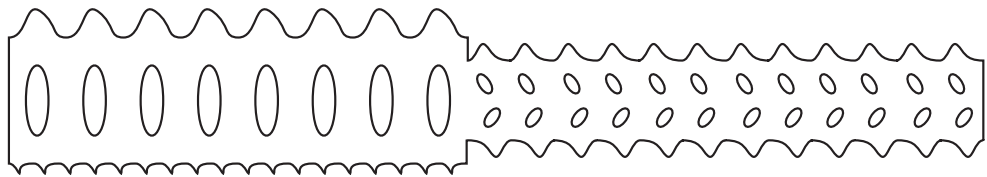}
\caption{Thin cascade perforated domain with rapidly varying thickness}
\end{center}
\end{figure}
or thin cascade perforated domains with rapidly varying thickness (Fig.~5).
We need to add series with rapidly oscillating coefficients (see \cite{Mel_Pop_MatSb,Mel_Pop_book})
to the regular part of the asymptotic expansion for solutions of boundary-value problems in
thin cascade perforated domains with rapidly varying thickness.


\end{document}